\newcommand{\norm}[1]{\|#1 \|}
\newcommand{\R}{\mathbb{R}}
\newcommand{\N}{\mathbb{N}}
\newcommand{\T}{{\mathbb{T}^d}}
\renewcommand{\H}{\mathcal{H}}
\newcommand{\beq}{\begin{equation}}
\newcommand{\eeq}{\end{equation}}
\def\D{\Delta}
\def\g{\gamma}
\def\e{\varepsilon}
\newcommand{\cG}{{\cal G}}
\newcommand{\diver}{{\rm div}}
\def\pd{\partial}
\newtheorem{theorem}{Theorem}[section]
\newtheorem{lemma}[theorem]{Lemma}
\newtheorem{remark}[theorem]{Remark}
\numberwithin{equation}{section}
\title{A policy iteration method for Mean Field Games}
\author{Simone Cacace, Fabio Camilli, Alessandro Goffi}
\date{\today}
\begin{document}

\maketitle

\begin{abstract} 
The policy iteration method is a classical algorithm for solving   optimal control problems.
In this paper, we introduce  a policy iteration method for    Mean Field Games  systems, and we study the convergence of this  procedure to a solution of the problem. We also introduce suitable discretizations to numerically solve both stationary and evolutive problems. We show  the convergence of the   policy iteration method for the discrete problem and we study the performance of the proposed algorithm on some examples in dimension one and two. 

\end{abstract}

\noindent
{\footnotesize \textbf{AMS-Subject Classification}}. {\footnotesize 49N70; 35Q91; 91A13; 49D10.}\\
{\footnotesize \textbf{Keywords}}. {\footnotesize Mean Field Games; policy iteration;  convergence; numerical methods.}


\section{Introduction}

Mean Field Games (MFGs) models have been introduced in \cite{hcm,ll} to describe stochastic differential games with a very large number of agents. They have  a wide range of applications in Engineering, Economics, and Finance \cite{cdl,gnp}.  From a mathematical point of view, MFGs theory leads to the study of  a  
system of   differential equations composed, in the finite horizon case, by a backward Hamilton-Jacobi-Bellman
(HJB) equation for the value function of the single agent  and  a Fokker-Planck (FP) equation governing the distribution of the overall population, i.e.
\begin{equation}\label{MFG}
\begin{cases}
-\partial_tu-\e \Delta u+H(Du)=F(m(x,t)) & \text{ in }Q\\
\partial_tm -\e \Delta  m-\diver(mD_pH(Du))=0 & \text{ in }Q\\
m(x,0)=m_0(x),\, u(x,T)=u_T(x) & \text{ in }\T\ ,
\end{cases}
\end{equation}
where $Q:=\T\times(0,T)$, $\T$ stands for the flat torus $\R^d / \mathbb{Z}^d$, $\e>0$, $H$ is the Hamiltonian term   and $F$ is the so-called coupling term,  depending  locally on the density.\par
Apart from some very specific cases such as the linear-quadratic one \cite{bp}, MFG  systems typically have no closed form solutions, hence they have to be solved numerically (see for example \cite{acd,accd,cs} and \cite{al1} for a review). The forward-backward structure of the system, the strong coupling among the equations and the nonlinearity of the HJB equation are important features of the MFG system, and various  strategies to solve the   finite-dimensional problems obtained via the discretization of the MFG system have been discussed in the literature (\cite{al,bcarda,bks,cc,ch}).\par
The policy iteration method is usually attributed to Bellman \cite{b} and Howard \cite{h} and it  has played a   pivotal role in the numerical solution  of deterministic and stochastic control problems, both in discrete and continuous settings. It can be interpreted as a linearization method for an intrinsically nonlinear problem, and its global convergence in the finite dimensional case  was proved in \cite{h}. Moreover,  Puterman and Brumelle \cite{pb} observed that the policy iteration method can be also seen as a Newton's algorithm for the nonlinear control problem; therefore, if the initial guess is in a neighborhood of the true solution, then the convergence is quadratic. For continuous control problems, assuming that the control set is bounded, the convergence of the method has been obtained by Fleming \cite{fl}  and Puterman \cite{pu1,pu2}, who used this procedure to give a constructive proof of the existence of classical and weak solutions to quasilinear parabolic differential equations arising in the control of non-degenerate diffusion processes. Instead, for deterministic control problems with continuous state space, despite the method is largely used in the computation of the value function and  the  optimal control, no general convergence result  is known.  For recent results about the policy iteration method and its applications, see \cite{alla,bmz,kss,santos}.\par
In this paper, we consider the following policy iteration algorithm for the MFG system \eqref{MFG}. Let $L(q)$ be the Lagrangian associated to the Hamiltonian $H$.
Fixed $R>0$ and given a bounded, measurable vector field   $q^{(0)}:\T\times [0,T]\to\R^d$  with $\|q^{(0)}\|_{L^\infty(Q)}\le R$, we iterate on $k\ge 0$
\begin{itemize}
	\item[(i)] Solve
	\begin{equation}\label{eq:alg_FP}
	\left\{
	\begin{array}{ll}
	\pd_t m^{(k)}-\e\Delta m^{(k)}-\diver (m^{(k)} q^{(k)})=0,\quad &\text{ in }Q\\
	m^{(k)}(x,0)=m_0(x)&\text{ in }\T.
	\end{array}
	\right.
	\end{equation}
	\item[(ii)] Solve
	\begin{equation}\label{eq:alg_HJ}
	\left\{
	\begin{array}{ll}
	-\pd_t u^{(k)}- \e\Delta u^{(k)}+q^{(k)}\cdot Du^{(k)}-L(q^{(k)})=F(m^{(k)})&\text{ in }Q\\
	u^{(k)}(x,T)=u_T(x)&\text{ in }\T.
	\end{array}
	\right.
	\end{equation}
	\item[(iii)] Update the policy
	\begin{equation}\label{eq:update_policy}
	q^{(k+1)}(x,t)={\arg\max}_{|q|\le R}\left\{q\cdot Du^{(k)}(x,t)-L(q)\right\}\quad\text{ in }Q.
	\end{equation}
\end{itemize}
At $k^{th}$-step, frozen the policy $q^{(k)}$, we first update   $m^{(k)}$ by means of the forward FP equation \eqref{eq:alg_FP},  we plug the new distribution of agents   in \eqref{eq:alg_HJ} computing
the corresponding value function $u^{(k)}$  and, lastly, we   determine the new  policy $q^{(k+1)}$ corresponding to the value function $u^{(k)}$. If the coupling cost $F$ is independent of the density $m$, step {\it (ii)} and {\it (iii)} of the previous algorithm coincide with the classical policy iteration method for the HJB equation in \eqref{MFG}.\par
In our first result, see Theorem \ref{thm:policy_iteration1},  we prove  convergence (up to a subsequence) of the policy iteration method for    the MFG system   \eqref{MFG} assuming that the Hamiltonian is convex and globally Lipschitz, hence in a setting similar to \cite{fl,pu1,pu2}.\\
Our second   result, see  Theorem \ref{thm:policy_iteration}, deals with Hamiltonians having polynomial growth and states that, for $R$ sufficiently large, the sequence $(u^{(k)},m^{(k)})$ given by \eqref{eq:alg_FP}-\eqref{eq:update_policy} converges (up to a subsequence)  to a   solution of \eqref{MFG}. Since this result  does not suppose  the existence of a   solution to \eqref{MFG} nor   monotonicity
assumptions,   it can be also seen as a constructive proof of the existence of solutions to \eqref{MFG}.\\
 As in \cite{fl,pu1,pu2}, our approach relies on a priori estimates for the solutions of the linear problems \eqref{eq:alg_FP}, \eqref{eq:alg_HJ} in spaces of maximal regularity and on compactness properties of the functional spaces where the solution of the (nonlinear) problem is defined. With respect to former papers, we have two additional difficulties: the method is applied to a system of PDEs instead that to a single equation; moreover, in the second result,   the Hamiltonian has polynomial gradient growth and therefore the control variable is defined in the whole $\R^d$. The latter point is solved by observing that, via an a priori gradient estimates from \cite{CG2} for the solution to the HJB equation obtained via duality arguments,  the behaviour of $H$ only matters in a sufficiently large ball $B(0,R)$. Hence, one can truncate the Hamiltonian,  note then that the solution of \eqref{MFG} and the one of the corresponding truncated problem     coincide  and, finally, solve via policy iteration method the latter problem to obtain an approximation of the former one. \par
We also briefly discuss in Section \ref{sec:stat} a treatment for the stationary counterpart of \eqref{MFG} introduced by Lasry and Lions \cite{ll}, i.e. we implement a policy iteration algorithm for the ergodic MFG system
\begin{equation*}
	\begin{cases}
	 -\e\Delta u+H(Du)+\lambda=F(m(x)) & \text{ in }\T\\
	 -\e\Delta  m-\diver(mD_pH(Du))=0 & \text{ in }\T\\
	\int_\T m(x)dx=1,\quad m\ge 0,\quad \int_\T u(x)dx=0 \ ,
	\end{cases}
	\end{equation*}
where $\lambda$ stands for the ergodic constant. As it is well-known, this system describes the long-time average asymptotics of solutions to \eqref{MFG} and it is widely analyzed in the literature, see e.g. \cite{nhm,pt} and the references therein. In this case, the convergence result for the policy iteration algorithm will be proved in Theorem \ref{thm:policy_iteration_stat}.

Finally, we introduce suitable discretizations for both stationary and evolutive MFGs, and we employ the policy iteration method to numerically solve the corresponding discrete systems. We show  the convergence of the  policy iteration method for the discrete problem and we explain   that it  can be interpreted as a quasi-Newton method applied to the discrete MFG system. Some numerical tests in dimension one and two complete the presentation, including a performance comparison with a full Newton method.     
\par 
\smallskip
 
The paper is organized as follows. In Section \ref{sec:prelim} we collect   definitions and some technical lemmas necessary to  prove  the convergence results for the parabolic problem, to which is devoted  Section \ref{sec:evol}. 
Section \ref{sec:stat} describes the policy iteration method for the stationary ergodic MFG system. Section \ref{sec:numapp} comprehends the numerical approximation and the convergence of the policy iteration for the discrete problem, while in Section  \ref{sec:test}  we show some tests.


\section{Notations and preliminary results}\label{sec:prelim}
In this section we introduce some functional spaces and state some preliminary results we need in the forthcoming sections.\\
We denote by $L^r(\T)$ the space of all measurable and periodic functions on $\R^d$ belonging to $L^r_{\mathrm{loc}}(\R^d)$ equipped with the norm $\|u\|_r=\|u\|_{L^r((0,1)^d)}$. For  $\mu\in(0,1)$, $r\geq1$,  we denote with $W^{\mu,r}(\T)$ the standard fractional Sobolev spaces of periodic functions $u\in L^r(\T)$ such that the semi-norm
\[
[u]_{W^{\mu,r}(\T)}=\left(\iint_{\T\times\T}\frac{|u(x)-u(y)|^r}{|x-y|^{d+\mu r}}\,dxdy\right)^{\frac1r}\ ,
\]
is finite, thus endowed with the natural norm $\|\cdot\|_{W^{\mu,r}(\T)}=\|\cdot\|_r+[\cdot]_{W^{\mu,r}(\T)}$.
When $\mu>1$ is non-integer, one writes $\mu=k+\sigma$, with $k\in\N$ and $\sigma\in(0,1)$ and $W^{\mu,r}(\T)$ comprehends those functions $f\in W^{k,r}(\T)$ (the standard integer-order Sobolev space on the torus) whose distributional derivatives $D^\alpha f$, $|\alpha|=k$, belong to $W^{\sigma,r}(\T)$ previously defined. We refer the reader to \cite{ST} for a treatment of fractional spaces on the torus as well as to \cite{CG1,Lun} for the definitions via real interpolation in Banach spaces, see also the references therein.\\
For any $r\geq1$, we denote by $W^{2,1}_r(Q)$ the space of functions $f$ such that $\partial_t^{\delta}D^{\beta}_x u\in L^r(Q)$ for all multi-indices $\beta$ and $\delta$ such that $|\beta|+2\delta\leq  2$, endowed with the norm
\begin{equation*}
\norm{u}_{W^{2,1}_r(Q)}=\left(\iint_{Q}\sum_{|\beta|+2\delta\leq2}|\partial_t^{\delta}D^{\beta}_x u|^rdxdt\right)^{\frac1r}.
\end{equation*}
We recall that, by classical results in interpolation theory, the sharp  space of initial (or terminal) traces of $W^{2,1}_r(Q)$ is given by the fractional Sobolev class $W^{2-\frac{2}{r},r}(\T)$, cf \cite[Corollary 1.14]{Lun}. To treat problems with divergence-type terms, we first define $W^{1,0}_s(Q)$ as the space of functions such that the norm
\[
\norm{u}_{W^{1,0}_s(Q)}:=\norm{u}_{L^s(Q)}+\sum_{|\beta|=1}\norm{D_x^{\beta}u}_{L^s(Q)}
\]
is finite. Then, we denote by $\H_s^{1}(Q)$ the space of those functions $u\in W^{1,0}_s(Q)$ with $\partial_t u\in (W^{1,0}_{s'}(Q))'$, equipped with the natural norm
\begin{equation*}
\norm{u}_{\mathcal{H}_s^{1}(Q)}:=\norm{u}_{W^{1,0}_s(Q)}
+\norm{\partial_tu}_{(W^{1,0}_{s'}(Q))'}\ .
\end{equation*}
For $\alpha\in(0,1)$, we denote the classical parabolic H\"older space $C^{\alpha,\frac{\alpha}{2}}(Q)$ as the space of functions $u\in C(Q)$ such that
\[
[u]_{C^{\alpha,\frac{\alpha}{2}}(Q)}:=\sup_{(x_1,t_1),(x_2,t_2)\in Q}\frac{|u(x_1,t_1)-u(x_2,t_2)|}{(\mathrm{dist}(x_1,x_2)^2+|t_1-t_2|)^{\frac{\alpha}{2}}}<\infty\ ,
\]
where $\mathrm{dist}(x,y)$ stands for the geodesic distance from $x$ to $y$ in $\T$. 
If   $s>d+2$, then $\H_s^1(Q)$ is continuously embedded onto $C^{\delta,\delta/2}(Q)$ for some $\delta\in (0,1)$, see \cite[Appendix A]{MPR}.\\
We now recall some standard parabolic regularity results we will use in the sequel.
\begin{lemma}\label{lemma:linear_FP}
	Let $g:Q\to\R^d$ be a bounded vector field   and  $m_0\in L^2(\T)$, $m_0\geq0$. Then the problem
	\begin{align*}
	\left\{
	\begin{array}{ll}
	\pd_t m- \e \Delta m+\diver (g(x,t)m)=0&\text{ in }Q,\\
	m(x,0)=m_0(x)&\text{ in }\T,
	\end{array}
	\right.
	\end{align*}
	has a unique   solution $m\in \H_2^1(Q)$, which is a.e. non negative on $Q$. Furthermore, if $m_0\in L^\infty(\T)$,  then $m\in L^\infty(Q)\cap \H_2^1(Q)$ and, if $m_0\in W^{1,s}(\T)$, $s\in(1,\infty)$, we have
	\begin{equation*}
	\|m\|_{\H_s^1(Q)}\le C
	\end{equation*}
	for some constant  $ C=C(\|g\|_{L^\infty(Q;\R^d)},\|m_0\|_{W^{1,s}(\T)})$.
\end{lemma}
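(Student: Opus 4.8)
The plan is to treat the equation as a linear divergence-form parabolic problem and to argue in three stages: variational well-posedness in $\H_2^1(Q)$, then the sign and $L^\infty$ bounds, and finally the maximal $L^s$-regularity estimate. For existence and uniqueness in $\H_2^1(Q)$ I would invoke the Lions variational framework on the Gelfand triple $H^1(\T)\hookrightarrow L^2(\T)\hookrightarrow H^{-1}(\T)$. The weak form reads $\langle\pd_t m,\vp\rangle+\e\int_{\T}Dm\cdot D\vp\,dx-\int_{\T}(gm)\cdot D\vp\,dx=0$ for all $\vp\in H^1(\T)$ and a.e.\ $t$. The associated time-dependent bilinear form is bounded since $g\in L^\infty$, and it is G\aa rding-coercive: testing with $m$ and using Young's inequality gives $\e\|Dm\|_{L^2}^2-\int(gm)\cdot Dm\ge\tfrac{\e}{2}\|Dm\|_{L^2}^2-C\|m\|_{L^2}^2$ with $C=C(\e,\|g\|_\infty)$. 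The parabolic Lions--Lax--Milgram theorem then yields a unique $m\in L^2(0,T;H^1(\T))$ with $\pd_t m\in L^2(0,T;H^{-1}(\T))$, i.e.\ $m\in\H_2^1(Q)$, together with the energy bound in $L^\infty(0,T;L^2)\cap L^2(0,T;H^1)$.

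For nonnegativity I would test the weak formulation with the admissible function $m^-=\max\{-m,0\}\in L^2(0,T;H^1(\T))$. Using $Dm^-=-Dm\,\mathbf{1}_{\{m<0\}}$ one finds $\tfrac12\tfrac{d}{dt}\|m^-\|_{L^2}^2+\e\|Dm^-\|_{L^2}^2=\int_{\T}(gm^-)\cdot Dm^-\,dx$, and Young's inequality absorbs the gradient term into the left-hand side, leaving $\tfrac{d}{dt}\|m^-\|_{L^2}^2\le C\|m^-\|_{L^2}^2$. Since $m_0\ge0$ forces $m^-(\cdot,0)=0$, Gronwall's lemma gives $m^-\equiv0$, hence $m\ge0$ a.e. When $m_0\in L^\infty(\T)$, I would obtain the $L^\infty$ bound by a standard De Giorgi--Stampacchia truncation argument, testing with $(m-k)^+$ for $k\ge\|m_0\|_\infty$ and iterating over the levels $k$ (equivalently, by the parabolic maximum principle), producing $\|m\|_{L^\infty(Q)}\le C(\|m_0\|_\infty,\|g\|_\infty,T)$.

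The core of the statement is the $\H_s^1$ estimate. I would rewrite the equation as $\pd_t m-\e\Delta m=-\diver(gm)$ and regard $gm\in L^s(Q;\R^d)$ as divergence-form data. Maximal $L^s$-regularity for the heat operator in the $\H_s^1(Q)$ scale (as recalled in the preliminaries above) then gives $\|m\|_{\H_s^1(Q)}\le C\bigl(\|gm\|_{L^s(Q)}+\|m_0\|_{\mathrm{tr}}\bigr)\le C\bigl(\|g\|_\infty\|m\|_{L^s(Q)}+\|m_0\|_{W^{1,s}(\T)}\bigr)$, where $\|\cdot\|_{\mathrm{tr}}$ is the initial-trace norm and I use that $W^{1,s}(\T)$ embeds continuously into the initial-trace space of $\H_s^1(Q)$. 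It remains to control $\|m\|_{L^s(Q)}$ a priori, which I would do by a bootstrap in integrability: starting from the energy bound $m\in L^\infty(0,T;L^2)\cap L^2(0,T;H^1)$, the parabolic Sobolev embedding yields $m\in L^{p_0}(Q)$ with $p_0>2$; feeding $gm\in L^{p_0}$ into the maximal-regularity estimate gives $m\in\H_{p_0}^1(Q)\hookrightarrow L^{p_1}(Q)$ with $p_1>p_0$, and iterating finitely many times reaches an exponent $\ge s$. Since $W^{1,s}\hookrightarrow W^{1,p}$ for $p\le s$, the constant at each step depends only on $\|g\|_\infty$ and $\|m_0\|_{W^{1,s}(\T)}$, yielding the claimed $C=C(\|g\|_\infty,\|m_0\|_{W^{1,s}})$.

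The main obstacle is precisely closing this last estimate: the drift $\diver(gm)$ is a first-order term handled against a second-order operator with merely bounded coefficient, so a naive absorption fails because the embedding $\|m\|_{L^s}\le\|m\|_{\H_s^1(Q)}$ carries no small constant. The bootstrap above circumvents this by exploiting the genuine integrability gain of the parabolic embeddings of $\H_p^1(Q)$; alternatively, I would run a short-time fixed-point argument in $\H_s^1$, where the maximal-regularity constant multiplying $\|gm\|_{L^s}$ can be made small by shrinking the time interval, and then continue the estimate over all of $[0,T]$ using the linearity of the problem.
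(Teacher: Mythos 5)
Your proposal is correct, and it is genuinely more self-contained than what the paper does: the paper's proof of this lemma is essentially a proof by citation, attributing well-posedness and positivity to \cite{LSU}, the integrability estimates (valid even for rougher drifts) to \cite{BCCS}, and the $\H_s^1$ bound to the arguments of \cite[Proposition 2.2]{CG2}, which proceed by duality, i.e.\ by testing the Fokker--Planck equation against solutions of a dual backward equation and using maximal regularity for that dual problem. Your route to the key estimate --- treating $-\diver(gm)$ as divergence-form data for the heat operator, applying maximal $L^s$-regularity in the $\H^1_s$ scale, and closing via a bootstrap through the embeddings $\H^1_p(Q)\hookrightarrow L^q(Q)$ with $1/q=1/p-1/(d+2)$ (the embeddings from \cite{MPR} recorded in the paper's Section 2) --- is precisely the alternative the paper itself gestures at when it remarks that ``one can get the regularity result even when $m_0\in W^{1-2/s,s}(\T)$ via maximal regularity'', and indeed your argument would work under that weaker trace assumption, since $W^{1,s}(\T)$ embeds into the trace space. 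Your Lions variational existence, the $m^-$ test for positivity, and the De Giorgi truncation for the $L^\infty$ bound are the standard arguments behind the \cite{LSU} citation, and your diagnosis that naive absorption of $\|g\|_\infty\|m\|_{L^s}$ fails is exactly right. Two small points to make explicit: in the bootstrap you should apply maximal regularity at the exponent $\min\{s,p_i\}$ at each step, since $m_0$ is only assumed in $W^{1,s}$; and in your short-time alternative the smallness does not come from the maximal-regularity constant itself (which is merely bounded for bounded $T$) but from H\"older-in-time gains on $\|m\|_{L^s(\T\times(0,\tau))}$, so the bootstrap is the cleaner closure. What the paper's duality route buys is robustness to much less regular drifts; what yours buys is a direct, constructive proof with transparent constant dependence on $\|g\|_{L^\infty}$ and $\|m_0\|_{W^{1,s}(\T)}$.
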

\begin{proof}
The well-posedness and positivity of $m$ are standard matter that can be found in \cite{LSU}, while integrability estimates, even under weaker assumptions on the drift, can be found in \cite{BCCS}. When $m_0\in W^{1,s}(\T)$, the estimate in $\H_s^1$ can be obtained following the arguments in \cite[Proposition 2.2]{CG2}, although one can get the regularity result even when $m_0\in W^{1-2/s,s}(\T)$ via maximal regularity.
\end{proof}
\begin{lemma}\label{lemma:linear_HJ}
	Let $b\in L^\infty(Q;\R^d)$, $f\in L^r(Q)$ and $u_T\in W^{2-\frac{2}{r},r}(\T)$ for some $r>d+2$. Then the problem
	\begin{align*}
	\begin{cases}
	-\partial_t u-\e\Delta u+b(x,t) \cdot Du=f(x,t)&\text{ in }Q\\
	u(x,T)=u_T(x)&\text{ in }\T
	\end{cases}
	\end{align*}	
 admits a unique solution $u\in W^{2,1}_r(Q)$  and it holds
	\begin{equation}\label{max1}
	\|u\|_{W^{2,1}_r(Q)}\leq C(\|f\|_{L^r(Q)}+\|u_T\|_{W^{2-\frac{2}{r},r}(\T)}),
	\end{equation}
	where $C$ depends on the norm of the coefficients as well as on $r,d,T$ and remains bounded for bounded values of $T$. 
	Furthermore, we have $Du\in C^{\alpha,\alpha/2}$ for some $\alpha\in(0,1)$.\\
Finally, if the coefficients $b,f$ belong to $C^{\alpha,\alpha/2}(Q)$ and $u_T\in C^{2+\alpha}(\T)$, then
 	\begin{equation}\label{schauder}
	\|\partial_t u\|_{C^{\alpha,\frac{\alpha}{2}}(Q)}+\|D^2u\|_{C^{\alpha,\frac{\alpha}{2}}(Q)}\leq C(\|f\|_{C^{\alpha,\frac{\alpha}{2}}(Q)}+\|u_T\|_{C^{2+\alpha}(\T)})\,,
	\end{equation}	
	where $C$ depends on the $C^{\alpha,\alpha/2}$-norm of the coefficients as well as on $d,T$ and remains bounded for bounded values of $T$.
\end{lemma}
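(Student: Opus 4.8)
The plan is to reduce the backward problem to a forward one by the time reversal $v(x,t):=u(x,T-t)$, which turns the equation into $\pd_t v-\e\Delta v+\tilde b\cdot Dv=\tilde f$ with $\tilde b(x,t)=b(x,T-t)$, $\tilde f(x,t)=f(x,T-t)$ and \emph{initial} datum $v(x,0)=u_T(x)$; the terminal trace space $W^{2-\frac2r,r}(\T)$ thereby becomes the initial trace space and all the norms appearing in the statement are preserved. From here I work with this forward equation and regard the transport term $\tilde b\cdot Dv$ as a lower-order perturbation of $\pd_t-\e\Delta$.

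The core of the argument is the a priori bound \eqref{max1}. The starting point is the classical $L^r$ maximal parabolic regularity for the constant-coefficient operator $\pd_t-\e\Delta$ on $Q$ (see \cite{LSU,Lun}): applied with right-hand side $\tilde f-\tilde b\cdot Dv$ it yields
\[
\|v\|_{W^{2,1}_r(Q)}\le C\big(\|\tilde f-\tilde b\cdot Dv\|_{L^r(Q)}+\|u_T\|_{W^{2-\frac2r,r}(\T)}\big).
\]
Since $b\in L^\infty$, the drift is controlled by $\|\tilde b\cdot Dv\|_{L^r(Q)}\le\|b\|_{L^\infty}\|Dv\|_{L^r(Q)}$, and the first-order term is absorbed through the interpolation inequality $\|Dv\|_{L^r}\le\eta\|D^2v\|_{L^r}+C_\eta\|v\|_{L^r}$ on $\T$. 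Choosing $\eta$ small absorbs $\eta\|D^2v\|_{L^r}$ into the left-hand side and leaves a zero-order remainder $C\|v\|_{L^r(Q)}$, which is closed off by a Gronwall argument run over the increasing cylinders $\T\times(0,\tau)$; this is the step that must be handled carefully to ensure that the final constant $C$ stays bounded as $T$ ranges in a bounded interval. Existence and uniqueness then follow by the method of continuity, deforming $\pd_t-\e\Delta$ into $\pd_t-\e\Delta+\tilde b\cdot D$ along the family $\pd_t-\e\Delta+\sigma\,\tilde b\cdot D$, $\sigma\in[0,1]$, for which the a priori estimate is uniform in $\sigma$ and the endpoint $\sigma=0$ is solvable by maximal regularity.

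The Hölder regularity of the gradient is then immediate. Because $r>d+2$, the parabolic Sobolev embedding $W^{2,1}_r(Q)\hookrightarrow C^{1+\alpha,\frac{1+\alpha}{2}}(Q)$ holds with $\alpha=1-\frac{d+2}{r}\in(0,1)$ (see \cite{LSU}), so in particular $Du\in C^{\alpha,\alpha/2}(Q)$ with a norm bounded by $\|u\|_{W^{2,1}_r(Q)}$, hence by the right-hand side of \eqref{max1}. For the Schauder estimate \eqref{schauder} I would then invoke classical parabolic Schauder theory: once $Du\in C^{\alpha,\alpha/2}(Q)$ from the previous step and $b\in C^{\alpha,\alpha/2}(Q)$ by hypothesis, the whole forcing term $\tilde f-\tilde b\cdot Dv$ lies in $C^{\alpha,\alpha/2}(Q)$, and viewing the equation as the heat equation with this Hölder right-hand side and datum $u_T\in C^{2+\alpha}(\T)$, the Schauder estimates for $\pd_t-\e\Delta$ on $Q$ give \eqref{schauder}, with a constant that degenerates only through factors of the form $e^{CT}$ and so remains bounded on bounded time intervals.

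The main obstacle is really the second step: obtaining the $L^r$ maximal regularity estimate for a \emph{merely bounded} (not continuous) drift while keeping the constant uniform for bounded $T$. The interpolation-plus-Gronwall absorption is what makes this work, and its $T$-tracking is the only genuinely delicate point; the Hölder gradient bound and the Schauder estimate are thereafter direct appeals to the standard embedding and Schauder results.
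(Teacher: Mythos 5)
Your proposal is correct in substance, but it takes a genuinely different route from the paper, whose proof is purely by citation: the estimate \eqref{max1} is Theorem IV.9.1, p.~342 of \cite{LSU}, which is stated for drifts merely in $L^r(Q;\R^d)$, $r>d+2$ (so a fortiori for $b\in L^\infty$ on the bounded cylinder $Q$); the H\"older bound on $Du$ is Corollary IV.9.1 there (or the embeddings in \cite{CG1} with $s=1$); and the Schauder estimate \eqref{schauder} is eq.~(10.5), p.~352 of \cite{LSU}. What you do instead is re-derive the variable-drift maximal regularity by perturbation of the constant-coefficient operator: time reversal, the interpolation $\|Dv\|_{L^r}\le\eta\|D^2v\|_{L^r}+C_\eta\|v\|_{L^r}$ with absorption, a Gronwall/continuation argument over the subcylinders $\T\times(0,\tau)$ for the zero-order remainder, and the method of continuity for solvability. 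This buys a self-contained argument with transparent dependence of $C$ on $\|b\|_{L^\infty}$ and $T$, at the price of two points you should make explicit: (i) the constant-coefficient maximal regularity constant on $\T\times(0,\tau)$ must be uniform in $\tau\le T$ (standard, via extension), and closing the zero-order term requires an actual bound on $\|v\|_{L^r(\T\times(0,\tau))}$, e.g.\ the $L^r$ energy estimate obtained by testing with $|v|^{r-2}v$, where the drift term is handled by Young's inequality against the dissipation $\e(r-1)\int|Dv|^2|v|^{r-2}$ --- you cannot integrate the drift by parts, since $b$ is not differentiable; (ii) in the Schauder step the embedding only yields $Du\in C^{\alpha',\alpha'/2}(Q)$ with $\alpha'=1-\frac{d+2}{r}$, which may be smaller than the given $\alpha$, so either bootstrap once through the Schauder estimate or simply note that $f\in C^{\alpha,\alpha/2}(Q)\subset L^{\tilde r}(Q)$ for every $\tilde r$, and choose $\tilde r$ so large that $1-\frac{d+2}{\tilde r}\ge\alpha$; then $b\cdot Du\in C^{\alpha,\alpha/2}(Q)$ and the constant-coefficient Schauder theory applies at the stated exponent, with constant depending on $\|b\|_{C^{\alpha,\alpha/2}(Q)}$ as required. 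With these details filled in, your argument is a valid and more elementary alternative to the paper's direct appeal to \cite{LSU}.
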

\begin{proof} 
	The estimate \eqref{max1} is a maximal regularity result that dates back to \cite[Theorem IV.9.1 p.342]{LSU}, obtained when $b\in L^r(Q;\R^d)$, $r>d+2$. The embedding of the spatial gradient in (parabolic) H\"older spaces for $r>d+2$ is proved in \cite[Corollary IV.9.1 p.342]{LSU}, see also the embeddings in \cite{CG1} (setting $s=1$) for a proof via a slightly different approach. \\
	The Schauder estimate \eqref{schauder} is proved in \cite[eq. (10.5), p. 352]{LSU}.
\end{proof}

\section{Convergence  of the policy iteration method: the evolutive problem}\label{sec:evol}

 In this section, we prove the convergence of the policy iteration method for the evolutive problem. 
Concerning the Hamiltonian, we focus on two different settings
\begin{itemize}
	\item[(i)] $H$ is differentiable, convex and globally Lipschitz  continuous, i.e.  there exists  a constant $\beta>0$ such that
	\begin{equation}\label{H1}
		|D_pH(p)|\leq \beta\qquad\text{ for all }p\in\R^d\ .
	\end{equation}
\item[(ii)] $H$ is of the form
\begin{equation}\label{H2}
	 H(p)=|p|^\gamma, \qquad \gamma>1.
\end{equation}
\end{itemize}
We define the Lagrangian $L:\R^d\to \R$ as the Legendre transform of $H$, i.e.
 $L(\nu)=\sup_{p\in\R^d}\left\{p\cdot \nu-H(p)\right\}$. In particular, it holds 
\begin{equation*}
H(p) = p\cdot q - L(q) \quad \text{if and only if} \quad q = D_p H(p)\ .
\end{equation*}
Note that, if \eqref{H1} holds, one can write  $H(p)=\sup_{|q|\le \beta}\{p\cdot q-L(q)\}$ and therefore in this case we may assume that the set of controls is bounded.\\
Concerning the coupling cost, we consider bounded local couplings by assuming that $F:\R^+\to\R$ is a continuous,  uniformly bounded  function, i.e. there exists $C_F>0$ such
\begin{equation}\label{F}
|F(m)|<C_F\quad \text{ for }m\geq0.
\end{equation}
 Finally, we suppose   that
\begin{equation}
\label{I}
\begin{aligned}
&\text{$u_T\in W^{2-\frac{2}{r},r}(\T)$, $r>d+2$,}\\ 
&\text{$m_0\in W^{1,s}(\T)$, $s>d+2$, is non-negative and $\int_{\T}m_0(x)dx=1$}.
\end{aligned}
\end{equation}

Our first result concerns the case of a globally Lipschitz Hamiltonian, and extends to  MFG systems the works by Fleming and Puterman.
\begin{theorem}\label{thm:policy_iteration1}
	Let \eqref{H1},  \eqref{F}, \eqref{I} be  in force. Then the sequence $(u^{(k)},m^{(k)})$, generated by the policy iteration algorithm converges,  up to a subsequence,   to a (strong) solution  $(u ,m)\in W^{2,1}_r(Q)\times   \H_s^{1}(Q)$  of \eqref{MFG}.\\
	Moreover, if
	\begin{equation} \label{unique}
	\int_{\T}(F(m_1)-F(m_2))d(m_1-m_2)(x)>0\ ,\forall m_1,m_2\in\mathcal{P}(\T)\ ,m_1\neq m_2\ ,
	\end{equation}
	 then all the sequence converges to the unique solution of \eqref{MFG}.
\end{theorem}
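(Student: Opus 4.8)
The plan is to derive $k$-uniform a priori bounds on the triple $(u^{(k)},m^{(k)},q^{(k)})$ in the maximal-regularity spaces of Lemmas \ref{lemma:linear_FP} and \ref{lemma:linear_HJ}, extract convergent subsequences by compactness, pass to the limit in \eqref{eq:alg_FP}--\eqref{eq:update_policy} to identify a solution of \eqref{MFG}, and finally invoke \eqref{unique} for uniqueness and hence convergence of the whole sequence. The first and cheapest step is a uniform bound on the policies: since $|D_pH|\le\beta$ by \eqref{H1}, choosing $R\ge\beta$ makes the constraint in \eqref{eq:update_policy} inactive, so for every $k\ge1$ the maximizer is $q^{(k)}=D_pH(Du^{(k-1)})\in\overline{B(0,\beta)}$, while $\|q^{(0)}\|_{L^\infty(Q)}\le R$ by assumption. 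Hence $\|q^{(k)}\|_{L^\infty(Q)}\le R$ for all $k$.

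Feeding this bound into Lemma \ref{lemma:linear_FP} (with drift $-q^{(k)}$) and using $m_0\in W^{1,s}(\T)$, $s>d+2$, yields $\|m^{(k)}\|_{\H_s^1(Q)}\le C$ with $C$ independent of $k$; by the embedding $\H_s^1(Q)\hookrightarrow C^{\delta,\delta/2}(Q)$ the densities are uniformly H\"older, in particular uniformly bounded and nonnegative. I then read \eqref{eq:alg_HJ} as a linear equation with bounded drift $q^{(k)}$ and source $g^{(k)}:=F(m^{(k)})+L(q^{(k)})$. By \eqref{F} the term $F(m^{(k)})$ is bounded by $C_F$; for $L(q^{(k)})$ I use that the policies range in the fixed ball $\overline{B(0,\beta)}$, on which the running cost $L$ is continuous, hence bounded -- this is precisely the bounded-control structure of \cite{fl,pu1,pu2}. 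Thus $\|g^{(k)}\|_{L^r(Q)}\le C$ uniformly, and Lemma \ref{lemma:linear_HJ} gives $\|u^{(k)}\|_{W^{2,1}_r(Q)}\le C$ together with a uniform bound on $Du^{(k)}$ in $C^{\alpha,\alpha/2}(Q)$. I expect this uniform control of the source to be the main obstacle: it amounts to a $k$-independent gradient bound for $u^{(k)}$ preventing $L(q^{(k)})$ from blowing up, and it is exactly where the global Lipschitz / bounded-control structure is essential. It is secured directly from boundedness of $L$ on the control ball, or, if needed, from the duality gradient estimate alluded to in the introduction.

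With these bounds, compactness furnishes a subsequence along which $u^{(k)}\to u$ with $Du^{(k)}\to Du$ uniformly (Arzel\`a--Ascoli from the $C^{\alpha,\alpha/2}$ bound) and $u^{(k)}\rightharpoonup u$ in $W^{2,1}_r(Q)$, while $m^{(k)}\to m$ uniformly and $m^{(k)}\rightharpoonup m$ in $\H_s^1(Q)$, with $m\ge0$. Since $q^{(k)}=D_pH(Du^{(k-1)})$ and $D_pH$ is continuous, also $q^{(k)}\to q:=D_pH(Du)$ uniformly. I then pass to the limit in the weak formulations: in \eqref{eq:alg_FP} the product $m^{(k)}q^{(k)}\to m\,D_pH(Du)$ uniformly, recovering the Fokker--Planck equation of \eqref{MFG}; in \eqref{eq:alg_HJ}, by uniform convergence of $Du^{(k)}$ and $q^{(k)}$, continuity of $L$ on the control ball and of $F$, the source converges as $q^{(k)}\cdot Du^{(k)}-L(q^{(k)})\to q\cdot Du-L(q)=H(Du)$ via the Legendre identity $H(p)=p\cdot D_pH(p)-L(D_pH(p))$, while $F(m^{(k)})\to F(m)$, yielding the Hamilton--Jacobi--Bellman equation. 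The data pass trivially, so $m(\cdot,0)=m_0$, $u(\cdot,T)=u_T$, mass is conserved, and $(u,m)\in W^{2,1}_r(Q)\times\H_s^1(Q)$ solves \eqref{MFG}.

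Finally, under \eqref{unique} I would establish uniqueness by the classical Lasry--Lions monotonicity argument: given two solutions, I subtract the HJB and FP equations, test the HJB difference against $m_1-m_2$ and the FP difference against $u_1-u_2$, and integrate by parts over $Q$; the second-order and coupling terms cancel in duality, convexity of $H$ makes the Hamiltonian contribution have a favourable sign, and one is left with $\iint_Q (F(m_1)-F(m_2))\,d(m_1-m_2)\le0$, which by \eqref{unique} forces $m_1=m_2$ and then $u_1=u_2$. Consequently the subsequential limit is unique, and the whole sequence $(u^{(k)},m^{(k)})$ converges.
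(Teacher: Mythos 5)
Your proposal follows essentially the same route as the paper's proof: the uniform $L^\infty$ bound on the policies from \eqref{H1}, the $k$-uniform estimates $\|m^{(k)}\|_{\H_s^1(Q)}\le C$ and $\|u^{(k)}\|_{W^{2,1}_r(Q)}\le C$ via Lemmas \ref{lemma:linear_FP} and \ref{lemma:linear_HJ} (with the source controlled exactly as in the paper, where the boundedness of $L$ on the control ball is likewise implicit in the constant depending only on $\beta$), the compactness step combining uniform convergence of $m^{(k)}$, $u^{(k)}$, $Du^{(k)}$ with weak $L^r$ convergence of $\partial_t u^{(k)}$, $D^2u^{(k)}$, the identification $q^{(k+1)}=D_pH(Du^{(k)})$ and the Legendre identity to recover $H(Du)$ in the limit, and the Lasry--Lions monotonicity argument under \eqref{unique} (which the paper delegates to \cite[Theorem 5.1]{CG1}) to upgrade subsequential to full-sequence convergence. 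The argument is correct and matches the paper's proof in all essentials.
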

\begin{proof}
Due to assumption \eqref{H1}, we have  
	\begin{equation}\label{eq:H_bounded}
	H(p)=\sup_{|q|\le R}\left\{p\cdot q-L(q)\right\}.
	\end{equation}	
for $R=\beta$. Moreover, the drift  in  the Fokker-Planck equation is uniformly bounded (independently of $u$). 
Given the vector field $q^{(k)}$ defined as in \eqref{eq:update_policy} at  step $k-1$, by Lemma \ref{lemma:linear_FP}, in view of the boundedness of the velocity field,
we infer the existence of a unique weak solution $m^{(k)}$ of \eqref{eq:alg_FP} satisfying
	\begin{equation}\label{eq:conv_est1}
	\|m^{(k)}\|_{\H_s^1(Q)}\le C. 
	\end{equation}
Moreover, by    Lemma \ref{lemma:linear_HJ} and \eqref{F}, 	there exists a unique strong  solution $u^{(k)}\in W^{2,1}_r(Q)$ such that
	\begin{equation}\label{eq:conv_est2a}
	\|u^{(k)}\|_{W^{2,1}_r(Q)}\le C(\|F(m^{(k)})\|_{L^r(Q)}+\|u_T\|_{W^{2-\frac{2}{r},r}(Q)})\,,
	\end{equation}
with $C$ depending only on $\beta$. Since $r>d+2$, by parabolic Sobolev embeddings we have
	\begin{equation}\label{eq:conv_est2b}
	\|Du^{(k)}\|_{C^{\alpha,\frac{\alpha}{2}}(Q)}\leq C
	\end{equation}
and,  by the hypotheses on $H$, this implies that $H(Du^{(k)})$ is space-time H\"older continuous. 
Since the supremum in \eqref{eq:H_bounded} is attained at $D_pH(Du^{(k)})$, we have that 
	\begin{equation*}
	q^{(k+1)}(x,t)=\mathrm{argmax}_{|q|\le R} \left\{q\cdot Du^{(k)}(x,t)-L(q)\right\}=D_pH(Du^{(k)}).
	\end{equation*}
In view of \eqref{eq:conv_est1} and the continuous embedding of $\H^1_s(Q)$ in $C^{\delta,\frac{\delta}{2}}(Q)$ for some $\delta\in (0,1)$, then there exists a subsequence, still denoted by $m^{(k)}$, which  uniformly converges to a continuous function $m$. 
By \eqref{eq:conv_est2a}, \eqref{eq:conv_est2b}, there exists a subsequence, still denoted by $u^{(k)}$, and a function $u$ such that $u^{(k)},Du^{(k)}$  converge uniformly to $u,Du$ and $\pd_t u^{(k)}, D^2u^{(k)}$ converge weakly in $L^r(Q)$ to $\partial_tu, D^2u$.\\
Consider the subsequence $(u^{(k)},m^{(k)})$ obtained by first  extracting a subsequence $m^{(k)}$  converging to $m$ and then a subsequence  $u^{(k)}$ converging to $u$. 
Then, passing to the limit in the weak formulation of \eqref{MFG} by means of the aforementioned convergences, one finds that the limit value $(u,m)\in W^{2,1}_r(Q)\times\H_s^1(Q)$ is a solution to \eqref{MFG} in distributional sense. \\
Finally, if assumption \eqref{unique} holds,   by a classical argument in \cite{ll}, (see  \cite[Theorem 5.1]{CG1} with $s=1$ and observe that it is only necessary to have sufficiently smooth solutions to run the arguments), the system \eqref{MFG} has a unique solution $(u,m)$. Hence, since any converging subsequence of the policy iteration method converges to the same limit, we get that all the sequence $(u^{(k)},m^{(k)})$ converges to the unique solution of \eqref{MFG}.
\end{proof}
In the proof of the previous result,  we also obtain the uniform convergence of the policy $q^{(k)}=D_pH(Du^{(k-1)})$ to  the optimal control for the limit problem $q=D_pH(Du)$.\\
We now consider the case of a Hamiltonian of the form $H(p)=|p|^\gamma$, for $\gamma>1$. Our second main result is the following
\begin{theorem}\label{thm:policy_iteration}
	Let  \eqref{H2}, \eqref{F}, \eqref{I} be  in force. Then, for $R$ sufficiently large in \eqref{eq:update_policy}, the sequence $(u^{(k)},m^{(k)})$, generated by the policy iteration algorithm converges,  up to a subsequence,   to a solution  $(u ,m)\in W^{2,1}_r(Q)\times   \H_s^{1}(Q)$  of \eqref{MFG}. Moreover, if \eqref{unique} holds, then all the sequence converges to the unique solution of \eqref{MFG}.
\end{theorem}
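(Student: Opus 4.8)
The plan is to reduce the polynomially growing case to the globally Lipschitz one, already handled in Theorem \ref{thm:policy_iteration1}, by truncating the Hamiltonian, and then to remove the truncation by means of an a priori gradient bound that does not depend on the truncation parameter. First I would introduce the truncated Hamiltonian
\[
H_R(p):=\sup_{|q|\le R}\{p\cdot q-L(q)\},
\]
which is convex, globally Lipschitz with constant $R$ and differentiable (as $L$ is strictly convex for $H(p)=|p|^\gamma$), so that it satisfies \eqref{H1} with $\beta=R$. The elementary but crucial remark is that the unconstrained maximizer in \eqref{eq:update_policy} is $D_pH(p)=\gamma|p|^{\gamma-2}p$, of modulus $\gamma|p|^{\gamma-1}$; hence $H_R(p)=H(p)$ and $D_pH_R(p)=D_pH(p)$ as soon as $|p|\le\rho_R:=(R/\gamma)^{1/(\gamma-1)}$. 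Consequently the policy iteration with the constraint $|q|\le R$ is exactly the policy iteration associated with the MFG system whose Hamiltonian is $H_R$. Since $H_R$ fulfils the hypotheses of Theorem \ref{thm:policy_iteration1}, that result applies and yields, up to a subsequence, convergence of $(u^{(k)},m^{(k)})$ to a solution $(u,m)\in W^{2,1}_r(Q)\times\H_s^1(Q)$ of the truncated system
\begin{align*}
-\partial_t u-\e\Delta u+H_R(Du)&=F(m),\\
\partial_t m-\e\Delta m-\diver(m\,D_pH_R(Du))&=0.
\end{align*}

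The heart of the argument is an a priori bound $\|Du\|_{L^\infty(Q)}\le C_0$ for this limit, with $C_0$ independent of $R$. Here I would invoke the duality gradient estimate of \cite{CG2}: a maximum-principle/representation argument first bounds $\|u\|_{L^\infty(Q)}$ in terms of $\|u_T\|_\infty$, $C_F$ and $T$ (using \eqref{F} and $H_R\ge0$); then, testing the equation for $u$ against the solution $m$ of its adjoint Fokker--Planck equation and using the convex duality $H_R(Du)=Du\cdot D_pH_R(Du)-L(D_pH_R(Du))$ produces an integral control of $\iint_Q m\,(Du\cdot D_pH(Du)-H(Du))$, i.e. of $\iint_Q m\,|Du|^\gamma$, in terms of the already bounded quantities. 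A maximal-regularity bootstrap in the spaces $\H_s^1(Q)$ and $W^{2,1}_r(Q)$ of Lemmas \ref{lemma:linear_FP}--\ref{lemma:linear_HJ} then upgrades this integral information to the pointwise bound $\|Du\|_{L^\infty(Q)}\le C_0$.

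The step I expect to be the main obstacle is precisely the $R$-independence of $C_0$: this is where the loss of boundedness of the control set is felt, and one must decouple the estimate from the Lipschitz constant of $H_R$. The mechanism is that $H_R$ coincides with the genuinely superlinear $H(p)=|p|^\gamma$ on the ball $B(0,\rho_R)$, so the coercivity exploited by the duality estimate is available up to the scale $\rho_R$; since $\rho_R\to\infty$ as $R\to\infty$, a continuity/bootstrap argument shows that the constant produced by the estimate can be taken below $\rho_R$ for $R$ large, making the bound self-consistent and $R$-free.

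Once $C_0$ is available, I would fix $R$ so large that $\rho_R>C_0$, i.e.\ $R>\gamma\,C_0^{\gamma-1}$. Then $|Du|\le C_0<\rho_R$ almost everywhere on $Q$, whence $H_R(Du)=H(Du)$ and $D_pH_R(Du)=D_pH(Du)$ a.e.; therefore the limit $(u,m)$ solves the original system \eqref{MFG} and not merely its truncation. Finally, the uniqueness statement under \eqref{unique} follows exactly as in Theorem \ref{thm:policy_iteration1}: the Lasry--Lions monotonicity argument forces any two solutions to coincide, and since every convergent subsequence produced by the algorithm has the same limit, the whole sequence $(u^{(k)},m^{(k)})$ converges to the unique solution of \eqref{MFG}.
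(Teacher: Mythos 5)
Your proposal is correct and follows essentially the same route as the paper: truncate the Hamiltonian (your $H_R(p)=\sup_{|q|\le R}\{p\cdot q-L(q)\}$ is exactly the paper's $H_S$ with $S=(R/\gamma)^{1/(\gamma-1)}$, i.e.\ $(1-\gamma)S^\gamma+\gamma S^{\gamma-1}|p|$ outside $B(0,S)$), apply Theorem \ref{thm:policy_iteration1} to the truncated system, invoke the duality/adjoint gradient estimate of \cite{CG2} with a constant independent of the truncation to conclude that for $R$ large solutions of the truncated system solve \eqref{MFG}, and use \eqref{unique} for full-sequence convergence. The only cosmetic deviation is in your sketch of the gradient bound, where the paper tests against adjoint solutions $\rho_S$ with arbitrary normalized smooth terminal data (obtaining $\iint_Q\rho_S\min\{|Du|^\gamma,S^\gamma\}\,dxdt\le C$ uniformly in $S$, then the Bernstein argument) rather than against $m$ itself, but this matches the self-consistency mechanism you correctly identify as the crux.
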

In this case, the main ingredient of the proof is an  a priori gradient estimate recently obtained in \cite{CG2}
  for strong solutions to Hamilton-Jacobi equations with $H$ as in \eqref{H2} and unbounded right-hand sides, which is stated in the next lemma for bounded source terms.
\begin{lemma}\label{bern}
Let $f\in L^\infty(Q)$, $H$ differentiable
and, for some $\g>1$,
\begin{equation*}
	D_pH(p)\cdot p-H(p)\ge c_H |p|^\gamma-\tilde c_H
\end{equation*}
\begin{equation*}
C_H^{-1}|p|^\gamma-C_H\leq	H(p)\leq C_H(|p|^\gamma+1)
\end{equation*}
\begin{equation*}
C_H^{-1}|p|^{\gamma-1}-C_H\leq	|D_pH(p)|\leq C_H(|p|^{\gamma-1}+1)
\end{equation*}
for all $p\in\R^d$ and some positive constants $c_H,\tilde c_H,C_H$. Then, if $u\in W^{2,1}_r(Q)$ is a strong solution to
\begin{equation*}
\partial_t u-\e\Delta u+H(Du)=f(x,t)\quad\text{ in }Q\ 
\end{equation*}
then  there exists a constant $C$ depending only on the data and $c_H,\tilde c_H,C_H$ such that
\begin{equation}\label{Lip_est}
	\|Du\|_{L^\infty(Q)}\leq C.
\end{equation}
\end{lemma}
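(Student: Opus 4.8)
The plan is to obtain \eqref{Lip_est} as an a priori estimate by the Bernstein method, carried out in integral (rather than pointwise) form so as to tolerate the mere $L^\infty$-regularity of $f$. A natural alternative is the nonlinear adjoint/duality method, in which the same coercivity $D_pH(p)\cdot p-H(p)\ge c_H|p|^\g-\tilde c_H$ furnishes the fundamental energy inequality $\iint_Q\rho\,(D_pH(Du)\cdot Du-H(Du))\le C$ against the adjoint density $\rho$; I shall instead follow the Bernstein route, which is closer to the statement at hand. Since the claim is an a priori bound, I would first reduce to smooth data by mollifying $f$ and the trace and by regularizing $H$, so that the differentiations below are licit; the assumed $W^{2,1}_r$-regularity is used only to guarantee that all the integral quantities entering the argument are finite, while the point of the lemma is that the final constant depends solely on $\e,T$, the data and $c_H,\tilde c_H,C_H$, and not on $\|u\|_{W^{2,1}_r(Q)}$.

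Set $w=|Du|^2$. Differentiating the equation in each direction $x_i$, multiplying by $2\pd_{x_i}u$ and summing yields the pointwise identity
\[
\pd_t w-\e\D w+2\e|D^2u|^2+D_pH(Du)\cdot Dw=2\,Df\cdot Du\ .
\]
Because $f$ is only bounded, the classical pointwise maximum-principle argument (which would require $Df$) is unavailable. Instead I would test this identity against $w^{p}$, integrate over $Q$, and transfer the gradient off $f$ by parts, so that the troublesome right-hand side becomes $-2\int_Q f\,\diver(w^{p}Du)=-2\int_Q f\,(w^{p}\D u+p\,w^{p-1}Du\cdot Dw)$. Substituting $\e\D u=\pd_t u+H(Du)-f$ from the equation itself then re-expresses everything through $w$, $H(Du)$, and the dissipative quantities $\e|D^2u|^2w^{p}$ and $\e p\,|Dw|^2w^{p-1}\sim|Dw^{(p+1)/2}|^2$ produced by the diffusion.

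The structural hypotheses enter precisely when absorbing the resulting terms. The growth bound $|D_pH(p)|\le C_H(|p|^{\g-1}+1)$ (and the ensuing $|D^2H(p)|\lesssim|p|^{\g-2}$) controls the transport contribution $\int_Q D_pH(Du)\cdot Dw\,w^{p}$ after a further integration by parts, while the coercivity $D_pH(p)\cdot p-H(p)\ge c_H|p|^\g-\tilde c_H$ together with the two-sided bound on $H$ furnishes a genuinely favourable contribution of order $\int_Q|Du|^{\g}w^{p}$. The balance to verify is that this coercive term, together with the Laplacian dissipation, dominates the supercritical powers of $w$ (of the type $w^{p+\g/2}$) generated by the $f\,H(Du)$ and transport terms; here one splits exponents by Young's inequality and trades integrability for the gradient gain carried by $|Dw^{(p+1)/2}|^2$ via the parabolic Sobolev inequality.

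Executing this for a fixed $p$ gives a bound on $\|w\|_{L^{p+1}}$ in terms of lower powers of $w$; iterating over an increasing sequence of exponents in the spirit of Moser, and using the embedding of $W^{2,1}_r(Q)$ giving $Du\in C^{\alpha,\alpha/2}$ recorded in Lemma~\ref{lemma:linear_HJ} merely to start from a finite integrability, one upgrades these bounds to $\|Du\|_{L^\infty(Q)}\le C$. I expect the main obstacle to be exactly the bookkeeping of exponents along this iteration: one must ensure that at every step the coercive $|Du|^\g$-term strictly controls the supercritical contributions coming from $f\in L^\infty$, so that the iteration constants remain bounded and the limiting exponent is genuinely $+\infty$. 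The assumption $\g>1$ is what places $L^\infty$ data comfortably above the critical integrability threshold for this scaling and allows the iteration to close.
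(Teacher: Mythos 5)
Your plan is genuinely different from the paper's, and as written it has a gap at its central point: the origin of the favourable coercive term. In the Bernstein identity for $w=|Du|^2$ the Hamiltonian enters \emph{only} through the sign-indefinite transport term $D_pH(Du)\cdot Dw$; the quantity $D_pH(p)\cdot p-H(p)$ never appears there. That coercive combination is produced by pairing the equation for $u$ (not the one for $w$) against an adjoint density --- exactly the duality route you set aside. In Lions' elliptic integral Bernstein \cite[Theorem III.1]{lions85}, the good term of order $\iint_Q w^{p+\g}$ is instead generated from the equation via the lower bound on $H$: one has $\e^2(\Delta u)^2=(H(Du)-f)^2\ge c|Du|^{2\g}-C$, fed into the dissipation $|D^2u|^2\ge (\Delta u)^2/d$. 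Your substitution $\e\Delta u=\pd_t u+H(Du)-f$ is the parabolic analogue of that step, but it drags in $\pd_t u$, and nothing in your scheme controls $\iint_Q(\pd_t u)^2w^p$ --- it is not dominated by $w$, by the Hessian dissipation, or by the Sobolev gain. So the announced favourable contribution $\iint_Q|Du|^{\g}w^p$ has no source, while the transport term, estimated through $|D_pH(Du)|\le C_H(|Du|^{\g-1}+1)$ and Young's inequality, produces a term of order $\iint_Q w^{p+\g}$ (note: not $w^{p+\g/2}$ as you write --- your exponent bookkeeping is off exactly where you flagged the difficulty), and the Moser iteration as designed does not close. Two further flaws: the Hessian bound $|D^2H(p)|\lesssim|p|^{\g-2}$ does \emph{not} ``ensue'' from the growth of $D_pH$ (the lemma assumes only differentiability of $H$ with bounds on $H$ and $D_pH$, and mollifying $H$ gives no uniform Hessian control), so the integration by parts you invoke for the transport term is not available; and your opening reduction ``differentiate the (mollified) equation'' is precisely what the paper avoids.

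For comparison, the paper proves the lemma by invoking \cite[Theorem 1.3]{CG2}, i.e. Bernstein \emph{combined with} the nonlinear adjoint method: the adjoint Fokker--Planck equation is taken with drift $D_pH(Du)$, so the transport term in the Bernstein inequality is absorbed exactly by duality --- no $D^2H$, no Young absorption, no supercritical powers of $w$. Coercivity is used where it naturally lives: pairing the HJ equation with the adjoint density $\rho$ gives $\iint_Q\rho\,(D_pH(Du)\cdot Du-H(Du))\le C$, hence $\iint_Q\rho|Du|^\g\le C$, which in turn yields the integrability estimates on $\rho$ and $D\rho$ needed to integrate the troublesome $Df\cdot Du$ term by parts against $\rho$ when $f$ is merely bounded (the paper also notes $f\in L^\infty(Q)\subset L^q$ for every $q$ and $u_T\in W^{1,\infty}(\T)$). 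Moreover, following \cite{CG4}, the Bernstein computation is carried out directly at the level of $W^{2,1}_r$ strong solutions \emph{without differentiating the equation}, dispensing with your regularization step. Your instinct that a purely integral Bernstein argument suffices is exactly what the paper endorses --- but only in the stationary case (proof of Theorem \ref{thm:policy_iteration_stat}), where $\e\Delta u=H(Du)+\lambda-F(m)$ contains no time derivative and Lions' mechanism applies; in the evolutive setting that mechanism breaks, which is the very reason the adjoint method is used, and your proposal does not repair it.
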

\begin{proof}
The proof of this result, based on the Bernstein gradient estimate and the nonlinear adjoint method, can be found in \cite[Theorem 1.3]{CG2}, noting that, since $f\in L^\infty(Q)$, then $f\in L^q$ for every $q>1$ and that assumption \eqref{I} implies $u_T\in W^{1,\infty}(\T)$ by standard Sobolev embeddings. We emphasize that the Bernstein procedure can be applied to strong solutions in $W^{2,1}_r$ arguing as in \cite{CG4} without need to differentiate the equation.
\end{proof}

We are now in position to prove Theorem \ref{thm:policy_iteration}. In the proof, we first introduce a truncated Hamiltonian, which is globally Lipschitz continuous. This is due to the fact that the solution of the first equation in \eqref{MFG} satisfies the gradient bound in Lemma \ref{bern}, which readily implies that the behaviour of $H$ is important merely for $p\in B(0,R)$, $R\sim \|Du\|_{L^\infty}$ and therefore, for $R$ large enough in \eqref{eq:update_policy}, a solution of the truncated problem is also a solution of the original one. As a result, we can apply the convergence result proved in Theorem \ref{thm:policy_iteration1} to the MFG system with $H$ given by \eqref{H2}.
\begin{proof}[Proof of Theorem \ref{thm:policy_iteration}]
Owing to the bound \eqref{Lip_est} and following e.g. \cite{al}, one introduces a truncated Hamiltonian defined as
\begin{equation*}
	H_S(p)=\begin{cases}
		|p|^\gamma&\text{ if }|p|< S\ ,\\
		(1-\gamma)S^\gamma+\gamma S^{\gamma-1}|p|&\text{ if }|p|\geq S\ ,
	\end{cases}	
\end{equation*}
and the problem
\begin{equation}\label{MFGS}
	\begin{cases}
		-\partial_tu -\e\Delta u +H_S(Du )=F(m (x,t)) & \text{ in }Q,\\
		\partial_tm  -\e\Delta  m -\diver(m D_pH_S(Du ))=0 & \text{ in }Q,\\
		m(x,0)=m_0(x),\, u(x,T)=u_T(x) & \text{ in }\T\ .
	\end{cases}
	\end{equation}
We  observe that $H_S$ satisfies
\[
D_pH_S(p)\cdot p-H_S(p)=\begin{cases}
		|p|^\gamma&\text{ if }|p|< S\ ,\\
		(\gamma-1)S^{\gamma-1}&\text{ if }|p|\geq S\ .
	\end{cases}
\]
Given a solution $(u_S,m_S)$ of the system \eqref{MFGS}, repeating the same proof of \cite{CG2}, one first proves the bound
\[
\iint_Q \rho_S\min\{|Du|^\gamma,S^\gamma\}\,dxdt\leq C
\]
with $C$ independent of $S$, being $\rho_S$ the solution to the adjoint problem
\[
\begin{cases}
		\partial_t\rho  -\e\Delta  \rho-\diver(\rho D_pH_S(Du_S))=0 & \text{ in }Q,\\
		\rho(x,\tau)= \rho_\tau(x) & \text{ in }\T\ ,
	\end{cases}
\]
where $\rho_\tau\in C^\infty(\T)$ with $\|\rho_\tau\|_1=1$. Then, using the previous estimate, one gets  the bound on $\|D u_S (\cdot, \tau)\|_{L^\infty(\T)}$,  independent of $S$.  So, if we take $S$ large enough, we have that a solution of \eqref{MFGS} is also a solution of \eqref{MFG}. Finally, since $H_S$ is globally Lipschitz continuous,   the convergence of the policy iteration method to a solution of \eqref{MFGS}, and therefore of \eqref{MFG}, follows by Theorem \ref{thm:policy_iteration1}.
\end{proof}
Some comments on the previous result and its proof  are in order. 
\begin{remark}
	For a Hamiltonian satisfying \eqref{H2},
	  Theorem \ref{thm:policy_iteration}   gives a convergence result for a policy iteration method obtained by truncating the Hamiltonian at each step,    not the original problem. On the other hand, from the point of view of the numerical resolution of the problem, the truncation of the control space is natural since the   calculation of the optimal control must be performed on a bounded domain.
\end{remark}
\begin{remark}
  In the case of a regularizing coupling $F$  and regular final data $u_T\in C^{2+\alpha}(\T)$, the    convergence results for the policy iteration method  in Theorems \ref{thm:policy_iteration1} and \ref{thm:policy_iteration} hold in the space $C^{2+\alpha,1+\frac{\alpha}{2}}(Q)\times   \H_s^{1}(Q)$. Indeed, in this case, it is possible to exploit the Schauder-type estimate in Lemma \ref{lemma:linear_HJ} since $Du\in C^{\alpha,\alpha/2}(Q)$ by parabolic Sobolev embeddings. Therefore  the linear HJ equation can be regarded as a linear problem with space-time H\"older coefficients.	  
\end{remark}

\section{The ergodic problem}\label{sec:stat}
	We   consider  the stationary MFG system
	\begin{equation}\label{MFG_stat}
	\begin{cases}
	 -\e\Delta u+H(Du)+\lambda=F(m(x)) & \text{ in }\T\\
	 -\e\Delta  m-\diver(mD_pH(Du))=0 & \text{ in }\T\\
	\int_\T m(x)dx=1,\quad m\ge 0,\quad \int_\T u(x)dx=0 \ .
	\end{cases}
	\end{equation}
	For fixed $R>0$ and given a bounded, measurable function $q^{(0)}$ such that $\|q^{(0)}\|_{L^\infty(\T)}\le R$,
	a policy iteration method for \eqref{MFG_stat} is given by
    \begin{itemize}
    	\item[(i)]  Solve
    	\begin{equation}\label{eq:alg_FP_stat}
    	\left\{
    	\begin{array}{ll}
    	-\e\Delta m^{(k)}-\diver (m^{(k)} q^{(k)})=0,\quad &\text{ in }\T\\
    	\int_\T m^{(k)}(x)dx=1,\quad m^{(k)}\ge 0.
    	\end{array}
    	\right.
    	\end{equation}
    	\item[(ii)] Solve
    	\begin{equation}\label{eq:alg_HJ_stat}
    	\left\{
    	\begin{array}{ll}
    	 -\e\Delta u^{(k)}+q^{(k)}\cdot  Du^{(k)}-L(q^{(k)})+\lambda^{(k)}=F(m^{(k)}(x))&\text{ in }\T\\
    	\int_\T u^{(k)}(x)dx=0.
    	\end{array}
    	\right.
    	\end{equation}
    	\item[(iii)] Update the policy
    	\begin{equation}\label{eq:update_policy_stat}
    	q^{(k+1)}(x,t)={\arg\max}_{|q|\le R}\left\{q\cdot Du^{(k)}(x)-L(q)\right\}\quad\text{ in }\T.
    	\end{equation}
    \end{itemize}
 We have the following convergence theorems for the stationary case.
 
  \begin{theorem}
 	Let \eqref{H1} and \eqref{F}  be  in force. Then, the sequence $(u^{(k)},\lambda^{(k)}, m^{(k)})$, generated by the policy iteration algorithm converges,  up to a subsequence,   to a solution  $(u,\lambda ,m)\in W^{2,r}(\T)\times\R\times   W^{1,s}(\T)$  of \eqref{MFG_stat}, uniformly in $\T$. Moreover, if \eqref{unique} holds,
 	then all the sequence converges to the unique solution of \eqref{MFG_stat}.
 \end{theorem}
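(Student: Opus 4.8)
The plan is to follow the strategy of Theorem~\ref{thm:policy_iteration1}, replacing the parabolic maximal regularity of Lemmas~\ref{lemma:linear_FP}--\ref{lemma:linear_HJ} by the corresponding elliptic (stationary) theory on the torus. Under \eqref{H1} I would take $R=\beta$ in \eqref{eq:update_policy_stat}: then the argmax is attained at $q^{(k+1)}=D_pH(Du^{(k)})$ and, most importantly, every policy obeys $\|q^{(k)}\|_{L^\infty(\T)}\le\beta$. Hence the drift in \eqref{eq:alg_FP_stat} is uniformly bounded independently of $u$, and $L(q^{(k)})$ is uniformly bounded because $L$ is continuous on the compact ball $\overline{B}(0,\beta)$.

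First I would collect the a priori estimates, uniform in $k$. For the Fokker--Planck step, elliptic regularity for \eqref{eq:alg_FP_stat} with bounded drift yields $m^{(k)}\ge0$, $\int_\T m^{(k)}=1$ and $\|m^{(k)}\|_{W^{1,s}(\T)}\le C$ with $C=C(\beta)$. The delicate point is to control the ergodic constant $\lambda^{(k)}$ and then $u^{(k)}$. To bound $\lambda^{(k)}$ I would exploit the duality between the two equations: testing the weak form of \eqref{eq:alg_FP_stat} with $u^{(k)}$ and that of \eqref{eq:alg_HJ_stat} with $m^{(k)}$, the cross terms $\e\int_\T Dm^{(k)}\cdot Du^{(k)}$ and $\int_\T m^{(k)}q^{(k)}\cdot Du^{(k)}$ cancel upon subtraction, leaving
\begin{equation*}
\lambda^{(k)}=\int_\T m^{(k)}\big(L(q^{(k)})+F(m^{(k)})\big)\,dx,
\end{equation*}
so that $|\lambda^{(k)}|\le\|L(q^{(k)})\|_{L^\infty(\T)}+C_F\le C$ by \eqref{F} and $\int_\T m^{(k)}=1$. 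With $\lambda^{(k)}$ under control, \eqref{eq:alg_HJ_stat} becomes the linear ergodic problem $-\e\Delta u^{(k)}+q^{(k)}\cdot Du^{(k)}=F(m^{(k)})+L(q^{(k)})-\lambda^{(k)}$ with bounded drift and bounded right-hand side; the standard regularity theory for such problems on the torus, together with the normalization $\int_\T u^{(k)}=0$, then gives $\|u^{(k)}\|_{W^{2,r}(\T)}\le C$ for any $r$, uniformly in $k$.

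Next I would pass to the limit. Since $r,s>d+2$, the Morrey embeddings $W^{1,s}(\T)\hookrightarrow\hookrightarrow C(\T)$ and $W^{2,r}(\T)\hookrightarrow\hookrightarrow C^1(\T)$ are compact, so up to a subsequence $m^{(k)}\to m$ uniformly, $u^{(k)}\to u$ and $Du^{(k)}\to Du$ uniformly, $\lambda^{(k)}\to\lambda$, while $D^2u^{(k)}\rightharpoonup D^2u$ weakly in $L^r$. By continuity of $D_pH$ the policies converge, $q^{(k)}=D_pH(Du^{(k-1)})\to D_pH(Du)=:q$ uniformly. Passing to the limit in the weak formulations and using the Legendre identity $q\cdot Du-L(q)=H(Du)$ together with $m\,D_pH(Du)=mq$, one recovers that $(u,\lambda,m)\in W^{2,r}(\T)\times\R\times W^{1,s}(\T)$ solves \eqref{MFG_stat}, with the convergences being uniform.

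Finally, under the monotonicity hypothesis \eqref{unique} the stationary system \eqref{MFG_stat} has a unique solution by the classical Lasry--Lions argument; since every convergent subsequence of $(u^{(k)},\lambda^{(k)},m^{(k)})$ has the same limit, the whole sequence converges. The main obstacle I expect is the uniform control of the ergodic pair $(\lambda^{(k)},u^{(k)})$: the extra unknown $\lambda^{(k)}$ and the absence of a zeroth-order term in \eqref{eq:alg_HJ_stat} mean that the $W^{2,r}$ bound cannot be read off a stated lemma and must be obtained from the ergodic elliptic theory, the key being the duality identity above for $\lambda^{(k)}$ and the fact that all constants depend only on $\beta$ and the data, hence are uniform in $k$.
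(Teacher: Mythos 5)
Your proposal is correct and follows essentially the same route as the paper, which simply says the proof runs along the lines of Theorem~\ref{thm:policy_iteration1} with the parabolic regularity of Lemmas~\ref{lemma:linear_FP}--\ref{lemma:linear_HJ} replaced by their stationary counterparts (Lemmas 2 and 3 of \cite{bf}). The only difference is one of detail: you derive by hand (via the duality identity $\lambda^{(k)}=\int_\T m^{(k)}\bigl(L(q^{(k)})+F(m^{(k)})\bigr)dx$ and elliptic theory) the uniform bounds on $(\lambda^{(k)},u^{(k)},m^{(k)})$ that the paper outsources to the cited ergodic regularity lemmas, and your computation is correct.
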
   
 \begin{proof}
The proof goes along the same lines as Theorem \ref{thm:policy_iteration1} and  we omit it. The only difference relies on the use of the regularity results  given in Lemma  2 and 3 in \cite{bf}, which are the stationary counterpart of     Lemma \ref{lemma:linear_FP} and Lemma \ref{lemma:linear_HJ}.
 \end{proof}
 \begin{theorem}\label{thm:policy_iteration_stat}
 	Let \eqref{H2} and  \eqref{F}  be  in force. Then, for $R$ sufficiently large, the sequence $(u^{(k)},\lambda^{(k)}, m^{(k)})$, generated by the policy iteration algorithm converges,  up to a subsequence,   to a solution  $(u,\lambda ,m)\in W^{2,r}(\T)\times\R\times   W^{1,s}(\T)$  of \eqref{MFG_stat}, uniformly in $\T$. Moreover, if \eqref{unique} holds,
 	then all the sequence converges to the unique solution of \eqref{MFG_stat}.
 \end{theorem}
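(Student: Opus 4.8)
The plan is to reproduce the strategy of Theorem \ref{thm:policy_iteration}, with the parabolic regularity of Lemmas \ref{lemma:linear_FP}--\ref{lemma:linear_HJ} replaced by their ergodic counterparts (Lemmas 2 and 3 in \cite{bf}) and with the role of Theorem \ref{thm:policy_iteration1} played by the preceding stationary theorem. As in the evolutive case, the whole argument hinges on a gradient bound that is uniform both along the iteration and with respect to a truncation parameter; once such a bound is available one truncates $H$ to a globally Lipschitz Hamiltonian and falls back to the already established Lipschitz result.

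First I would establish the stationary analogue of Lemma \ref{bern}: for $f\in L^\infty(\T)$ and $H$ as in \eqref{H2}, every strong solution $(u,\lambda)\in W^{2,r}(\T)\times\R$ of
\[
-\e\Delta u + H(Du) + \lambda = f \qquad \text{in } \T
\]
satisfies $\|Du\|_{L^\infty(\T)}\le C$, with $C$ depending only on $\|f\|_{L^\infty(\T)}$ and the structure constants of $H$, and in particular independent of $\lambda$. This follows from the Bernstein plus nonlinear adjoint method of \cite{CG2}: being a constant, $\lambda$ disappears when the equation is differentiated at an interior maximum of $|Du|^2$, so the estimate is insensitive to its value. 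Integrating the equation over $\T$ and using $\int_\T \Delta u\,dx=0$ then gives $\lambda = \int_\T (f - H(Du))\,dx$ (recall $|\T|=1$), so \eqref{F} together with the gradient bound yields an a priori bound on $\lambda$; applied at each step, this controls the sequence $\lambda^{(k)}$ along the iteration.

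Next I would truncate exactly as in Theorem \ref{thm:policy_iteration}, introducing the globally Lipschitz Hamiltonian $H_S$ and the associated truncated stationary system. Given a solution $(u_S,\lambda_S,m_S)$ of that system, I would repeat the adjoint argument of \cite{CG2} in the ergodic setting, now using the stationary Fokker--Planck operator as the adjoint, to obtain a bound on $\|Du_S\|_{L^\infty(\T)}$ independent of $S$. Taking $S$ (hence $R\ge S$ in \eqref{eq:update_policy_stat}) large enough then forces $\|Du_S\|_{L^\infty(\T)}<S$, where $H_S\equiv H$ and $D_pH_S\equiv D_pH$, so that any solution of the truncated system solves \eqref{MFG_stat}. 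Since $H_S$ is globally Lipschitz, the convergence up to a subsequence of $(u^{(k)},\lambda^{(k)},m^{(k)})$ to a solution of the truncated system --- and therefore of \eqref{MFG_stat} --- follows from the stationary Lipschitz theorem preceding the statement, whose proof mirrors Theorem \ref{thm:policy_iteration1}. Finally, under \eqref{unique} the stationary Lasry--Lions monotonicity argument \cite{ll} gives uniqueness of the solution of \eqref{MFG_stat}, and since every convergent subsequence shares this limit the whole sequence converges.

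The step I expect to be the main obstacle is the uniform gradient estimate in the ergodic framework: one has to check that the Bernstein/adjoint computation of \cite{CG2} transfers to the stationary equation while staying independent of both the truncation level $S$ and the ergodic constant $\lambda$. It is exactly this double uniformity that makes the truncation transparent and reduces the polynomial-growth problem to the Lipschitz one treated earlier.
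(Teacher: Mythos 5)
Your overall scaffolding matches the paper's proof almost line for line: the paper too proceeds by truncating $H$ to the globally Lipschitz $H_S$, establishing a stationary analogue of Lemma \ref{bern} that is uniform in the truncation, concluding that for $R$ large enough solutions of the truncated system solve \eqref{MFG_stat}, and then invoking the preceding Lipschitz stationary theorem together with the Lasry--Lions monotonicity argument under \eqref{unique}. Your bound on $\lambda^{(k)}$ by integrating the equation over $\T$ is also fine and is implicit in the paper's treatment.

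The step that fails as you wrote it is precisely the one you flagged as the main obstacle: the choice of adjoint problem. You propose to run the argument of \cite{CG2} ``using the stationary Fokker--Planck operator as the adjoint'', but on the torus that operator admits no nontrivial data: integrating $-\e\Delta\rho-\diver(D_pH(Du)\rho)=\psi$ over $\T$ forces $\int_\T\psi\,dx=0$, so the problem is unsolvable for any normalized $\psi\geq0$ with $\|\psi\|_{1}=1$; and the homogeneous equation produces only the invariant measure $m_S$, whose duality with the HJB equation yields merely an integrated bound of the type $\int_\T m_S|Du_S|^\gamma\,dx\le C$, which cannot control $\|Du_S\|_{L^\infty(\T)}$ where $m_S$ is small. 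The adjoint method needs a family of adjoint solutions with data concentrating near an \emph{arbitrary} point, exactly as the terminal data $\rho_\tau$ do in the parabolic proof. The paper's remedy is the resolvent-type dual equation
\[
-\e\Delta\rho+\rho-\diver(D_pH(Du)\rho)=\psi\qquad\text{in }\T\ ,
\]
with $\psi\in C^\infty(\T)$, $\|\psi\|_1=1$: the zeroth-order term $+\rho$ restores solvability (now $\int_\T\rho=\int_\T\psi$) and $\psi$ plays the role of the parabolic terminal datum. Alternatively, as the paper notes, one can bypass the adjoint method altogether via the integral Bernstein estimate of \cite[Theorem III.1]{lions85} (see also \cite{CG4}). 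A minor related point: your description of differentiating the equation at an interior maximum of $|Du|^2$ needs the caveat already made after Lemma \ref{bern} --- for $W^{2,r}$ strong solutions the Bernstein procedure is carried out as in \cite{CG4}, without differentiating the equation.
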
   
\begin{proof}
The proof is similar to the one of the parabolic case and we do not give the details. To obtain the stationary analogue of Lemma \ref{bern} it is enough to adapt the proof in \cite{CG2} considering the dual equation
\[
-\epsilon \Delta \rho+\rho-\mathrm{div}(D_pH(Du)\rho)=\psi\qquad\text{ in }\T\ ,
\]
where $\psi\in C^\infty(\T)$, $\|\psi\|_1=1$ plays the same role of the initial datum of the parabolic adjoint problem in \cite{CG2}. As alternative, one can use the integral Bernstein gradient estimate in \cite[Theorem III.1]{lions85} (see also \cite{CG4}) as a counterpart of that in Lemma \ref{bern}.
\end{proof}

\section{Numerical approximation}\label{sec:numapp}
In this section, we present some details on the numerical approximation of the stationary/evolutive MFG systems, and we prove the convergence of 
the corresponding discrete  policy iteration method in a simple setting. 
We consider the reference case of the Eikonal-diffusion HJB equation, namely we choose the Hamiltonian
\begin{equation*}
	H(x,Du)=\frac12 |Du|^2-V(x)=\sup_{q\in\R^d}\left\{q\cdot Du-\frac12 |q|^2 -V(x)\right\}\,,
\end{equation*}
where $V$ is a given bounded  potential, and 
we focus on 
the  stationary ergodic problem \eqref{MFG_stat}. 

We define a grid $\mathcal{G}$ on $\T$, the vectors $U,M$ approximating respectively $u,m$ at the grid nodes, and the number $\Lambda$ approximating the ergodic cost $\lambda$. Then, we approximate \eqref{MFG_stat} by the following nonlinear problem on $\mathcal{G}$, 
\begin{equation}\label{MFG_stat_eik_lap_discrete}
	\begin{cases}
		-\varepsilon\D_\sharp U+\frac12 |D_\sharp U|^2+\Lambda=V_\sharp+F_\sharp(M) & \\
		-\varepsilon\D_\sharp M-\diver_\sharp(M\,D_\sharp U)=0 & \\
		\int_\sharp M =1\,,\quad M\ge 0\,,\quad \int_\sharp U =0&
	\end{cases}
\end{equation}
where, in order to avoid cumbersome notation, we use the symbol $\sharp$ to denote suitable discretizations of the linear differential operators, evaluations of functions at the grid nodes, and quadrature rules for the integrals. Typical choices on uniform grids are centered second order finite differences for the discrete Laplacian, and simple rectangular quadrature rules for the integral terms, whereas the Hamiltonian and the divergence term in the FP equation are both computed via the Engquist-Osher numerical flux for conservation laws. For instance, in dimension $d=1$, given a uniform discretization of $\T$ with $I$ nodes $x_i$, for $i=0,\dots,I-1$, and space step $h=1/I$, we have
$$(\D_\sharp U)_i=\frac{1}{h^2}\left(U_{[i-1]}-2U_i+U_{[i+1]}\right)\,,$$
$$
(D_\sharp U)_i=\left(D_L U_i\,,\,D_R U_i\right)=\frac{1}{h}\left( U_i-U_{[i-1]}\,,\,U_{[i+1]}-U_i\right)\,,
$$
where the index operator $[\cdot]=\left\{(\cdot +I)\,mod\, I\right\}$ accounts for the periodic boundary conditions. Moreover, using the notation $(\cdot)^+=\max\left\{\cdot,0\right\}$ and $(\cdot)^-=\min\left\{\cdot,0\right\}$, we have     
$$
(|D_\sharp U|^2)_i=\left( D_L U_i^+\right)^2+\left(D_R U_i^-\right)^2\,,
$$
\begin{align*}
	\left(\diver_\sharp(M\,D_\sharp U)\right)_i=&\frac{1}{h}
	\left( M_{[i+1]} D_L U_{[i+1]}^+ - M_i D_L U_i^+ \right)\\
	+& \frac{1}{h}\left(
	M_i D_R U_i^- -M_{[i-1]} D_R U_{[i-1]}^-
	\right)\,,
\end{align*}
$$
(F_\sharp(M))_i=F(M_i)\,,\quad (V_\sharp)_i=V(x_i)\,,\quad \int_\sharp M=h\sum_{i=0}^{I-1}M_i\,,\quad \int_\sharp U=h\sum_{i=0}^{I-1}U_i\,,
$$
  We refer the reader to \cite{acd,al1,cc} and the references therein, for further details on the discretization of MFG systems. 

It is well known that the two-sided gradient $D_\sharp$ is designed to approximate viscosity solutions to Hamilton-Jacobi equations, and to correctly catch, for first order equations, possible kinks in the solution $U$. 
It is worth noting that, at a formal level, $D_\sharp U$ acts in the scheme as a vector field with a number of components $2d$, doubled with respect to dimension $d$ of the problem. This suggests a natural way to approximate the policy $q$ in \eqref{eq:alg_FP_stat}-\eqref{eq:alg_HJ_stat} when building the policy iteration algorithm. Indeed, 
given an initial guess  
$Q=(Q_L,Q_R):\mathcal{G}\to\R^{2d}$ and using the notation $Q_\pm=(Q_L^+,Q_R^-)$, we set 
$Q^{(0)}=Q$ and we iterate on $k\ge 0$ the following steps: 

\begin{itemize}
	\item[(i)]  Solve
	\begin{equation*}
	\left\{
	\begin{array}{ll}
		-\varepsilon\D_\sharp M^{(k)}-\diver_\sharp(M^{(k)}\,Q^{(k)})=0
		,
		\quad &\text{ on }\mathcal{G}\\
		\int_\sharp M^{(k)} =1\,,\quad M^{(k)}\ge 0.
	\end{array}
	\right.
	\end{equation*}
	\item[(ii)] Solve
	$$
	\left\{
	\begin{array}{ll}
		-\varepsilon\D_\sharp U^{(k)}+Q^{(k)}_\pm\cdot D_\sharp U^{(k)}+\Lambda^{(k)}=\frac12 |Q^{(k)}_\pm|^2+V_\sharp+F_\sharp(M^{(k)})&
		\text{ on }\mathcal{G}\\
		\int_\sharp U^{(k)} =0\,.
	\end{array}
	\right.
	$$
	\item[(iii)] Update the policy
	\begin{equation}\label{eq:update_policy_discrete}
		Q^{(k+1)}=
	\begin{cases}
		D_\sharp U^{(k)}\quad &\mbox{if $|D_\sharp U^{(k)}|\le R$}\\[4pt]
		\frac{D_\sharp U^{(k)}}{|D_\sharp U^{(k)}|} R&\mbox{if $|D_\sharp U^{(k)}|> R$} 				
	\end{cases} 
	\quad\text{ on }\mathcal{G}.
	\end{equation} 
\end{itemize}

In the following theorem, we prove the convergence of the above discrete policy iteration, in the case of a quadratic Hamiltonian and in dimension one, 
but the argument can be  extended with  similar techniques to any dimension and   more general Hamiltonians.
\begin{theorem}\label{PI_dis}
	For 	$R$ in \eqref{eq:update_policy_discrete}  sufficiently large,  the sequence $(U^{(k)},\Lambda^{(k)}, M^{(k)})$, generated by the policy iteration algorithm, converges,  up to a subsequence,   to a solution  $(U,\Lambda,M)$ of \eqref{MFG_stat_eik_lap_discrete}. Moreover, if \eqref{unique} holds, 	 then all the sequence converges to the unique solution of  \eqref{MFG_stat_eik_lap_discrete}.
\end{theorem}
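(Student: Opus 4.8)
The plan is to reproduce, in the present finite-dimensional setting, the scheme used for Theorem \ref{thm:policy_iteration1}: establish well-posedness and continuous dependence for the two linear sub-problems solved at each iteration, derive a priori bounds uniform in $k$, extract a convergent subsequence by finite-dimensional compactness, and pass to the limit in the discrete equations. First I would check that, for any admissible policy $Q$ with $|Q|\le R$ on $\mathcal{G}$, steps (i) and (ii) are uniquely solvable with continuous dependence on $Q$. For the discrete Fokker--Planck equation this rests on the fact that the Engquist--Osher divergence $\diver_\sharp(\,\cdot\,Q)$ is, by construction, the (negative) adjoint of the discrete transport operator $Q_\pm\cdot D_\sharp$, so that its matrix is, up to transposition, the generator of an irreducible Markov chain on $\mathcal{G}$ (irreducibility coming from the positive diffusion $\varepsilon\D_\sharp$ on the connected periodic grid); hence its kernel is one-dimensional and spanned by the strictly positive stationary distribution, which after normalizing by $\int_\sharp M=1$ gives a unique $M\ge 0$. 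For the discrete ergodic HJB in step (ii), the operator $-\varepsilon\D_\sharp+Q_\pm\cdot D_\sharp$ annihilates exactly the constants, so the discrete Fredholm alternative fixes $\Lambda^{(k)}$ through the compatibility condition obtained by testing against the invariant measure $M^{(k)}$, namely $\Lambda^{(k)}=\int_\sharp\big(\tfrac12|Q^{(k)}_\pm|^2+V_\sharp+F_\sharp(M^{(k)})\big)M^{(k)}$, while $U^{(k)}$ is then uniquely determined by $\int_\sharp U^{(k)}=0$.

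These identities immediately yield the a priori bounds. Since $M^{(k)}$ lies in the discrete simplex it is bounded; $|\Lambda^{(k)}|\le\tfrac12 R^2+\|V\|_\infty+C_F$ follows from the compatibility formula together with \eqref{F} and $|Q^{(k)}_\pm|\le R$; and $U^{(k)}$ is controlled because the HJB operator is boundedly invertible on the zero-mean subspace, uniformly over the compact set $\{|Q|\le R\}$. As everything lives on a fixed finite grid, boundedness gives precompactness, so I can extract a subsequence along which $Q^{(k)},M^{(k)},U^{(k)},\Lambda^{(k)}$ all converge; by continuity of all discrete operations (the Laplacian, the Engquist--Osher fluxes, the map $F$, and the truncation in \eqref{eq:update_policy_discrete}) the limits $M,U,\Lambda$ solve the corresponding linear sub-problems for the limiting policy $Q$.

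The delicate point, and the one I expect to be \emph{the main obstacle}, is to certify that the limit genuinely solves \eqref{MFG_stat_eik_lap_discrete}, i.e. that the policy $Q$ carried through steps (i)--(ii) coincides with the one recomputed from $U$ via \eqref{eq:update_policy_discrete}. The policy used at step $k$ is built from $U^{(k-1)}$, not from $U^{(k)}$, and—contrary to single-agent policy iteration—the coupling $F(M^{(k)})$ destroys the monotonicity of the value along the iteration, so convergence of $U^{(k)}$ along a subsequence does not by itself control $U^{(k-1)}$. The way I would close this gap is to show that the increments vanish, $\|U^{(k)}-U^{(k-1)}\|\to 0$ (equivalently $\|Q^{(k+1)}-Q^{(k)}\|\to 0$); then any subsequential limit of $U^{(k)}$ is also the limit of $U^{(k-1)}$ along the same subsequence, the frozen policy $Q=\mathrm{Trunc}(D_\sharp U)$ is consistent, and the linearized term $Q_\pm\cdot D_\sharp U-\tfrac12|Q_\pm|^2$ collapses to the quadratic Hamiltonian $\tfrac12|D_\sharp U|^2$, exactly as in Theorem \ref{thm:policy_iteration1}. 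It then remains to remove the truncation: invoking the discrete analogue of the a priori gradient bound of Lemma \ref{bern} for the stationary system, which controls $|D_\sharp U|$ for solutions independently of $R$, one fixes $R$ above this threshold, so that \eqref{eq:update_policy_discrete} is inactive at the limit and $Q=D_\sharp U$; the limit thus solves the untruncated discrete system \eqref{MFG_stat_eik_lap_discrete}.

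Finally, under the discrete monotonicity assumption \eqref{unique} I would run the discrete Lasry--Lions uniqueness argument: testing the difference of two solutions of \eqref{MFG_stat_eik_lap_discrete} against the difference of the corresponding densities and using the discrete duality between the HJB and FP operators produces the strictly positive term in \eqref{unique}, forcing $M_1=M_2$ and then $U_1=U_2$, $\Lambda_1=\Lambda_2$. Since every convergent subsequence then shares the same limit and the full sequence is precompact, the entire sequence $(U^{(k)},\Lambda^{(k)},M^{(k)})$ converges to the unique solution.
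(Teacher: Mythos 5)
Your overall architecture coincides with the paper's: well-posedness of the two frozen-policy linear problems via the duality $A^T(Q)=-\varepsilon\D_\sharp+Q_\pm\cdot D_\sharp$, bounds uniform in $k$, finite-dimensional compactness, passage to the limit to the truncated system \eqref{eq:MFG_discrete_truncated}, removal of the truncation for $R$ large, and full-sequence convergence under \eqref{unique}. Two of your sub-arguments are correct variants worth keeping. For step (i), you view $-A^T(Q)$ as the generator of an irreducible Markov chain (the neighbor entries are $\le-\varepsilon/h^2<0$ and the column sums of $A(Q)$ vanish by the conservative flux structure), so Perron--Frobenius gives $\dim\ker A(Q)=1$ \emph{and} strict positivity of the invariant vector; the paper instead argues $\dim\ker A(Q)=\dim\ker A^T(Q)=1$ via the Fredholm alternative and \cite[Theorem 1]{acd}, and builds $M$ through the M-matrix iteration $(\mu I+A(Q))W^{(s+1)}=\mu W^{(s)}$ (which it also uses as the inner numerical solver); your route actually \emph{explains} the positivity of $M$ that the paper records only as a surprising a posteriori observation. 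Likewise your compatibility identity $\Lambda^{(k)}=\int_\sharp\bigl(\tfrac12|Q^{(k)}_\pm|^2+V_\sharp+F_\sharp(M^{(k)})\bigr)M^{(k)}$ is correct (test the right-hand side against $\ker A(Q^{(k)})=\mathrm{span}(M^{(k)})$, normalized by $\int_\sharp M^{(k)}=1$) and yields the explicit bound $\tfrac12R^2+\|V\|_\infty+C_F$ where the paper simply cites the estimates of \cite[Theorem 1]{acd}.

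There are, however, two genuine gaps, both at steps you announce rather than prove. The first is the consistency of the limit. You are right that the index shift ($Q^{(k)}$ is built from $U^{(k-1)}$) is the delicate point and that the coupling $F_\sharp(M^{(k)})$ destroys the monotonicity of classical policy iteration; but your proposed bridge, $\|U^{(k)}-U^{(k-1)}\|\to0$, is asserted with no argument. None of the ingredients you have (boundedness and continuity on a fixed grid) implies it: an iteration of a continuous map on a compact set can perfectly well have limit points that are not fixed points, so some quantitative mechanism --- monotonicity, a contraction estimate, or summability of the Bregman-type gap $\tfrac12|D_\sharp U^{(k)}-Q^{(k)}_\pm|^2$ --- is indispensable, and you supply none. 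The paper does not prove vanishing increments either: its proof extracts a single subsequence along which $(U^{(k)},\Lambda^{(k)},M^{(k)})$ \emph{and} the policies $Q^{(k)}$ converge jointly, and passes to the limit in (i)--(iii) simultaneously to land on \eqref{eq:MFG_discrete_truncated}; so your plan is strictly stronger than what the paper commits to, and as written it is a hole in your argument, not a proof.

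The second gap is the removal of the truncation. You invoke a ``discrete analogue of Lemma \ref{bern}'' controlling $|D_\sharp U|$ \emph{independently of $R$}. No such discrete Bernstein estimate is available or proved anywhere: the paper explicitly remarks that its gradient bound \eqref{eq:stima_grad_discrete} is \emph{not} uniform in $h$, and the bound $C_2$ on $\max_\cG|D_\sharp U^{(k)}|$ from step (ii) depends on $R$, so it cannot serve either. The paper's actual argument is different and far more elementary: by \cite[Theorem 3]{acd} the untruncated discrete system \eqref{MFG_stat_eik_lap_discrete} admits a solution which, the grid being fixed and finite, trivially satisfies \eqref{eq:stima_grad_discrete} with a constant depending only on $h$; taking $R$ above this threshold, solutions of \eqref{eq:MFG_discrete_truncated} and of \eqref{MFG_stat_eik_lap_discrete} coincide, so that \eqref{eq:update_policy_discrete} is inactive at the limit. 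You should replace your appeal to an unproven discrete Bernstein lemma by this fixed-grid argument (or actually prove the claimed $R$-uniform discrete estimate). Your final uniqueness paragraph --- the discrete Lasry--Lions duality argument under \eqref{unique}, plus precompactness to upgrade subsequential to full convergence --- matches the paper, which cites \cite[Theorem 3]{acd} for this point.
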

\begin{proof}  
	We first show that the policy iteration algorithm is well defined.
To this end, we begin with the discrete FP equation in (i), namely we consider the following matrix
\begin{equation*}
	A(Q):=	-\varepsilon\D_\sharp  -\diver_\sharp(\cdot\,Q)\,,
\end{equation*}
for a given $Q=(Q_L,Q_R):\mathcal{G}\to\R^2$ such that $|Q|\le R$ for some $R>0$. 
We claim that $A(Q)$ is singular (e.g., for $Q=0$, we simply get the discrete Laplacian with periodic boundary conditions, which has a zero eigenvalue). More precisely, we show that $dim(ker(A(Q)))=1$. For $i=0,\dots,I-1$, the non zero entries of $A(Q)$ are the following
$$
\begin{array}{c}
A_{i,i}(Q)=2\displaystyle\frac{\varepsilon}{h^2}+\frac{1}{h}Q_{L,i}^+-\frac{1}{h}Q_{R,i}^-\\
A_{i,[i-1]}(Q)=\displaystyle-\frac{\varepsilon}{h^2}+\frac{1}{h}Q_{R,[i-1]}^-\qquad
A_{i,[i+1]}(Q)=\displaystyle-\frac{\varepsilon}{h^2}-\frac{1}{h}Q_{L,[i+1]}^+\,.
\end{array}
$$
By Fredholm alternative, $dim(ker(A(Q)))=dim(ker(A^T(Q)))$, 
where the transpose matrix has the following non zero entries
$$
\begin{array}{c}
A_{i,i}^T(Q)=2\displaystyle\frac{\varepsilon}{h^2}+\frac{1}{h}Q_{L,i}^+-\frac{1}{h}Q_{R,i}^-\\
A_{i,[i-1]}^T(Q)=\displaystyle-\frac{\varepsilon}{h^2}-\frac{1}{h}Q_{L,i}^+\qquad
A_{i,[i+1]}^T(Q)=\displaystyle-\frac{\varepsilon}{h^2}+\frac{1}{h}Q_{R,i}^-
\end{array}\,,
$$
namely
$$
A^T(Q)=	-\varepsilon\D_\sharp  +Q_L^+ D_L+ Q_R^- D_R=-\varepsilon\D_\sharp+Q_\pm\cdot D_\sharp\,,
$$
which is exactly the linear operator appearing in the linearized HJ equation (ii) (conversely this duality is just exploited in \cite[Remark 1]{acd} to define the discrete divergence operator $\diver_\sharp$).
Since the Hamiltonian
\begin{equation}\label{eq:discrete_Ham}	
		g(x_i,p_1,p_2)=\frac 1 h Q^{+}_{L,i} p_1+\frac 1 h Q^{-}_{R,i} p_2,\qquad (p_1,p_2)\in \R^2,
\end{equation}
is continuous, non decreasing with respect to $p_1$, non increasing with respect to $p_2$ and  convex, it can be proved that the equation $A^T(Q)U=0$ admits only constant solutions $(C,\dots,C)\in\R^{|\cG|}$ for $C\in\R$, see step 1 of \cite[Theorem 1]{acd}. We conclude that $dim(ker(A^T(Q)))=1$ and the claim is proved.

We now build a solution $M\in\R^{|\cG|}$ of the discrete FP equation $A(Q)M=0$ satisfying $M\ge 0$ and $\int_\sharp M=1$. To this hand, we recall that $|Q|\le R$ and we observe that $A(Q)$ has a non negative diagonal and non positive off-diagonals, since by definition $Q_L^+\ge 0$ and $Q_R^-\le 0$. This implies that, for $\mu>0$ sufficiently large, the matrix $\mu I
+A(Q)$ is a non singular M-matrix, hence the following iterations on $s\ge 0$ are well defined
$$	(\mu I +A(Q) ) W^{(s+1)} = \mu W^{(s)}\,.$$
Moreover, if we choose $W^{(0)}\in\R^{|\cG|}\setminus \{0\}$ such that $W^{(0)}\ge 0$ and $\int_\sharp W^{(0)}=1$, the same properties hold for every $s\ge 0$, respectively due to the monotonicity of the M-matrix and by definition of $A(Q)$. In particular, the sequence $W^{(s)}$ is bounded. Therefore it converges, 
up to a subsequence, to a vector $M\ge 0$ satisfying $\int_\sharp M=1$
and 
$$	(\mu I +A(Q) ) M = \mu M\qquad \Longleftrightarrow \qquad A(Q)M=0\,.$$
Since $dim(ker(A(Q)))=1$, it follows that $M$ is the unique solution of the discrete FP equation satisfying $\int_\sharp M=1$. In particular, the whole sequence $W^{(s)}$ is convergent and it provides a constructive way to compute $M$. Surprisingly, we found out that the condition $\int_\sharp M=1$ is enough to prevent a change of sign in $M$, hence, a posteriori, the condition $M\ge 0$ can be omitted.\\
Summing up, we proved that, for every $Q^{(k)}:\mathcal{G}\to\R^2$ such that $|Q^{(k)}|\le R$, there exists a unique solution to the problem in step (i) of the policy iteration.


	We now consider the problem in  step (ii). Using again the Hamiltonian  \eqref{eq:discrete_Ham} with  $Q^{(k)}$ in place of $Q$, it can be proved (as in step 1 of \cite[Theorem 1]{acd}) 
	that the problem admits a unique solution $(U^{(k)},\Lambda^{(k)})$ satisfying the normalization condition  $\int_\sharp U^{(k)} =0$.  Moreover the following estimates hold
	\[
	|\Lambda^{(k)}|\le C_1,\quad \max_{\cG}(|D_\sharp U^{(k)}|)\le C_2,
	\]
	for two positive constants $C_1$, $C_2$ depending on $R$, 
	$\max_{\cG}|V_\sharp|$ and $\max_{\cG}|F_\sharp|$. Hence,  also  the sequence $\{(U^{(k)},\Lambda^{(k)})\}_{k\in\N}$ is bounded  in $\R^{|\cG|}\times\R$.\\
	Therefore,
	up to   a subsequence, we find that, as $k\to \infty$,   $(U^{(k)},\Lambda^{(k)}, M^{(k)})$ converges to  $(U,\Lambda,M)\in \R^{|\cG|}\times \R\times \R^{|\cG|}$ and $Q^{(k)}$ converges to $Q\in \R^{|\cG|\times |\cG|}$.
	Moreover, passing to the limit in (i)-(iii), $(U,\Lambda,M)$ satisfies 
	\begin{equation}\label{eq:MFG_discrete_truncated}
		\left\{
		\begin{array}{ll}
			-\varepsilon\D_\sharp U +Q \cdot D_\sharp U+\Lambda=\frac12 |Q|^2+V_\sharp+F_\sharp(M)&\\
			-\varepsilon\D_\sharp M-\diver_\sharp(MQ)=0 & \\
			\int_\sharp M =1\,,\quad M\ge 0\,,\quad \int_\sharp U =0&
		\end{array}
		\right.
	\end{equation}
	and
	\begin{equation*}	
		Q =  
		\begin{cases}
			D_\sharp U\quad &\mbox{if $|D_\sharp U|\le R$},\\[4pt]
			\frac{D_\sharp U}{|D_\sharp U|} R&\mbox{if $|D_\sharp U|> R$} . 				
		\end{cases}  
	\end{equation*}
	By \cite[Theorem 3]{acd}, \eqref{MFG_stat_eik_lap_discrete} has a  solution and, since the problem is discrete, it trivially satisfies  
	\begin{equation}\label{eq:stima_grad_discrete}	
		\max_{\cG}|D_\sharp U|\le C ,
	\end{equation}
	for some constant $C$, depending on $h$.
	Hence, for $R$ sufficiently large,  solutions to \eqref{eq:MFG_discrete_truncated}  are also solutions to \eqref{MFG_stat_eik_lap_discrete}    and therefore, for such $R$, we get the convergence of the policy iteration method. Moreover, if \eqref{unique} holds, then the solution of \eqref{MFG_stat_eik_lap_discrete} is unique (see\cite[. 3]{acd}) and therefore we get the convergence of all the sequence.
\end{proof}    	
\begin{remark}   
	As observed, the estimate \eqref{eq:stima_grad_discrete}	 is not uniform in $h$. But, since we are studying
	the convergence of the policy iteration method for $h$ fixed, this is not an issue at this level. Estimates on the discrete gradient, uniform in $h$, are provided in \cite{acd, accd}, also for more general Hamiltonians. They are important to study the convergence of the discrete problem to the continuous one, but we do not consider this point here.
\end{remark}	

For the sake of comparison, we consider here  the direct method for stationary MFGs introduced in \cite{cc}, which is based on a Newton-like algorithm applied to the full system \eqref{MFG_stat_eik_lap_discrete}, rewritten as a multidimensional root-finding problem. More precisely, performing a linearization of \eqref{MFG_stat_eik_lap_discrete} with respect to $(U,M,\Lambda)$, along a direction $(W_U,W_M,W_\Lambda)$ and starting from an initial guess $(U^{(0)},M^{(0)},\Lambda^{(0)})$, we get the following Newton iterations for $k\ge 0$,
\begin{equation}\label{Newton}
\mathcal{J}[U^{(k)},M^{(k)},\Lambda^{(k)}]
\left(\begin{array}{c}
     W_U \\
     W_M\\
     W_\Lambda 
\end{array}\right)=-\mathcal{F}(U^{(k)},M^{(k)},\Lambda^{(k)})\,,
\end{equation}
with updates
$$
(U^{(k+1)},M^{(k+1)},\Lambda^{(k+1)})=(U^{(k)},M^{(k)},\Lambda^{(k)})+(W_U,W_M,W_\Lambda)\,,
$$
where, denoting by  $|\mathcal{G}|$  the number of nodes of $\mathcal{G}$, the map $\mathcal{F}:\R^{2|\mathcal{G}|+1}\to\R^{2|\mathcal{G}|+2}$ is defined as
\begin{equation}\label{Fresidual}
\mathcal{F}(U,M,\Lambda)=
\left(\begin{array}{c}
     -\varepsilon\D_\sharp U+\frac12 |D_\sharp U|^2-V_\sharp-F_\sharp(M)+\Lambda \\
     -\varepsilon\D_\sharp M-\diver_\sharp(M\,D_\sharp U)\\
      \int_\sharp U\\
    \int_\sharp M -1
\end{array}\right)\,,
\end{equation}
while the Jacobian matrix $\mathcal{J}$ is given by

$$
\mathcal{J}
[U,M,\Lambda]=
\left(
\begin{array}{c|c|c}
-\varepsilon\D_\sharp + D_\sharp U\cdot D_\sharp & -F^\prime_\sharp(M) & 1_\sharp \\
\hline
-\diver_\sharp(M^{(k)}\,D_\sharp\, \cdot) & 
-\varepsilon\D_\sharp -\diver_\sharp(\cdot\,D_\sharp U) & 0_\sharp\\
\hline
\int_\sharp & 0 & 0\\
\hline
0 & \int_\sharp & 0\\
\end{array}
\right)
$$
with $0_\sharp=(0,\dots,0)^T\in\R^{|\mathcal{G}|}$ and $1_\sharp=(1,\dots,1)^T\in\R^{|\mathcal{G}|}$.

Note that, for each $k\ge0$, the above linear system consists in $2|\mathcal{G}|+2$ equations in the $2|\mathcal{G}|+1$ unknowns $(W_U,W_M,W_\Lambda)$, and its solution is meant in a least-squares sense, see \cite{cc} for further details. By rewriting $(W_U,W_M,W_\Lambda)$ in terms of successive iterations, we readily end up with    
$$
	\begin{cases}
	 -\varepsilon\D_\sharp U^{(k+1)}+ D_\sharp U^{(k)}\cdot D_\sharp U^{(k+1)}  -F^\prime_\sharp(M^{(k)})(M^{(k+1)}-M^{(k)})+\Lambda^{(k+1)}\\=\frac12 |D_\sharp U^{(k)}|^2+V_\sharp+F_\sharp(M^{(k)})\,, & \\\\
	 -\diver_\sharp(M^{(k)}\,D_\sharp (U^{(k+1)}-U^{(k)}))-\varepsilon\D_\sharp M^{(k+1)}-\diver_\sharp(M^{(k+1)}\,D_\sharp U^{(k)})=0\,, & \\\\
	\int_\sharp M^{(k+1)}=1\,,\quad \int_\sharp U^{(k+1)} =0\,.
	\end{cases}
$$
Since both $U^{(k)}$ and $M^{(k)}$ are expected to converge, we can neglect, for $k$ large, the two terms  $F^\prime_\sharp(M^{(k)})(M^{(k+1)}-M^{(k)})$ and  $\diver_\sharp(M^{(k)}\,D_\sharp (U^{(k+1)}-U^{(k)}))$. This completely decouples the above system, and yields exactly the policy iteration algorithm by setting $Q^{(k)}=D_\sharp U^{(k)}$ at each iteration. Thus, we can reinterpret the policy iteration as a {\it quasi-Newton} method for the system \eqref{MFG_stat_eik_lap_discrete}, by dropping the two corresponding off-diagonal blocks in the Jacobian matrix $\mathcal{J}$. 

\section{Numerical simulations}\label{sec:test}
Here, we provide some details on the implementation of the policy iteration method. Then we present a comparison with the direct Newton method \eqref{Newton} for a stationary MFG system in dimension one, and a test in dimension two for the evolutive case. 

Concerning the stationary case, at each iteration $k$, the solution of the discrete FP equation in step (i) is obtained by the M-matrix regularization discussed in Theorem \ref{PI_dis}: 
starting from an initial guess $W^{(0)}\in\R^\mathcal{G}\setminus\{0\}$, with $W^{(0)}\ge 0$ and $\int_\sharp W^{(0)} =1$, we solve iteratively on $s\ge 0$ \begin{equation*}
	(\mu I +A(Q^{(k)})) W^{(s+1)} = \mu W^{(s)}\,,
\end{equation*}
namely a sequence of linear systems of size $|\mathcal G|\times |\mathcal G|$.
Note that this introduces an additional (inner) iteration in the policy iteration algorithm. Moreover, by rewriting each linear system in the form
$$
\frac{W^{(s+1)}-W^{(s)}}{\frac{1}{\mu}}=-A(Q^{(k)})) W^{(s)}\,,
$$
we can reinterpret the regularization as an implicit gradient descent scheme with step $\frac{1}{\mu}$ for finding the zeros of $A(Q^{(k)})$, via minimization of its associated quadratic form. It is clear that, as we increase $\mu$ to recover the M-matrix property, we dramatically loose the advantage of an implicit scheme, slowing down the convergence of $W^{(s)}$.
In practice we can tune the parameter $\mu$ for the specific test, and perform only a fixed number of inner iterations. 

For the remaining steps of the policy iteration algorithm, we observe that step (ii) corresponds to the solution of a square linear system of size $(|\mathcal{G}|+1)\times (|\mathcal{G}|+1)$ in the unknowns  $(U^{(k)},\Lambda^{(k)})$, while the policy update in step (iii) is explicit due to the particular choice for the Hamiltonian. In the general case, according to \eqref{eq:update_policy_stat}, a point-wise optimization on $\mathcal{G}$ is needed to obtain the new policy. 

On the other hand, each iteration in the direct Newton method \eqref{Newton} requires the solution of a system of size $(2|\mathcal{G}|+2)\times (2|\mathcal{G}|+1)$ in a least-squares sense. Both algorithms are implemented in $C$ language, 
employing the free library SuiteSparseQR \cite{SPQR} for solving the linear systems via $QR$ factorization. To check convergence, given a tolerance $\tau>0$, we rely on 
the $2$-norm of the residual for the discrete nonlinear system \eqref{Fresidual}, requiring $\|\mathcal{F}(U^{(k)},M^{(k)},\Lambda^{(k)})\|_2<\tau$.  

In the following test, we  set the problem in dimension $d=1$, with $\tau=10^{-8}$, $\varepsilon=0.3$, $V(x)=\sin(2\pi x)+\cos(4\pi x)$ and $F(m)=m^2$. In particular, the choice of the coupling cost satisfies the monotonicity assumption \eqref{unique}, ensuring uniqueness of solutions for the MFG system. Moreover, we set the initial guess for the Newton method as $U^{(0)}\equiv 0$, $M^{(0)}\equiv 1$ on $\mathcal{G}$ and  $\Lambda^{(0)}=0$, while we take the initial policy $Q^{(0)}\equiv(0,0)$ on $\mathcal{G}$ for the policy iteration algorithm. Finally, for the inner M-matrix iterations, we set $\mu=10^{-3}$ and $s=1$, with $W^{(0)}\equiv 1$ for $k=0$ and $W^{(0)}=M^{(k-1)}$ for $k\ge 1$.   

Figure \ref{test1} shows the solution computed by the policy iteration algorithm on a grid with $|\mathcal{G}|=200$ nodes, while in Figure \ref{test-comp} we compare the performace of the two methods. \begin{figure}[h!]
 \begin{center}
 \begin{tabular}{cc}
\includegraphics[width=.45\textwidth]{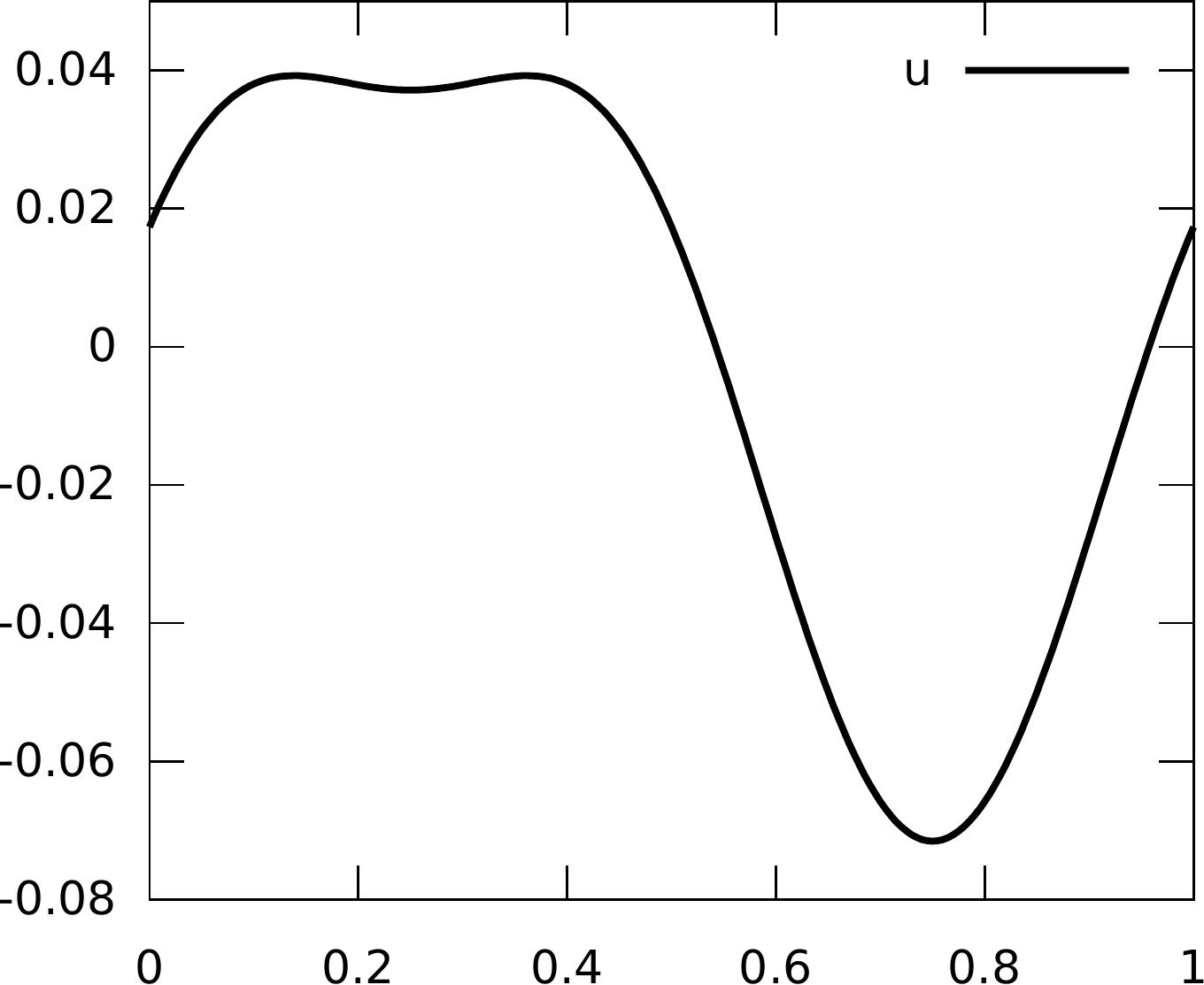} &
\includegraphics[width=.45\textwidth]{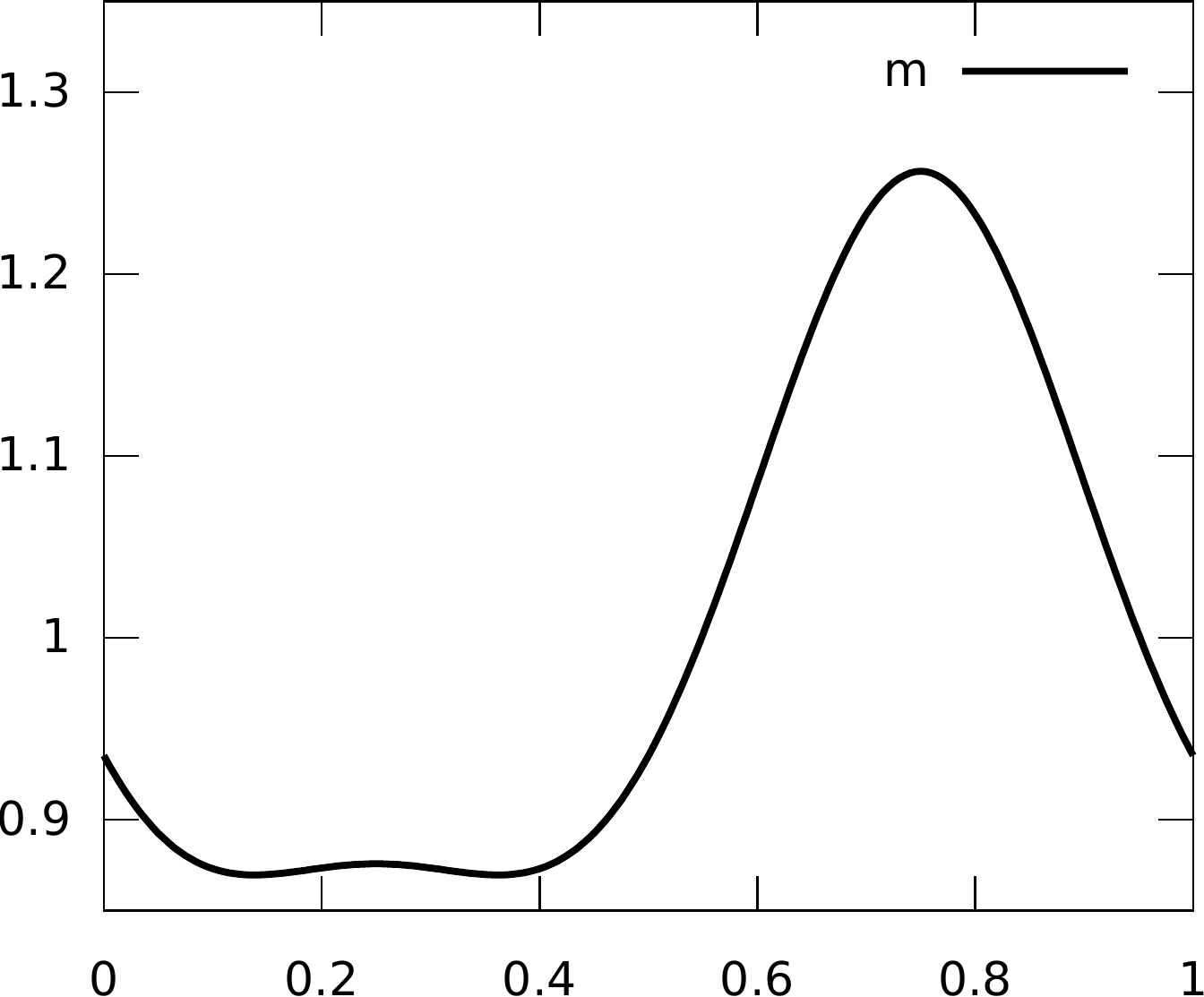} \\
(a)&(b)
\end{tabular}
\end{center}
\caption{Policy iteration solution for the stationary MFG system,  (a) the corrector $u$ and  (b) the density $m$.}\label{test1}
\end{figure}
\begin{figure}[h!]
 \begin{center}
 \begin{tabular}{cc}
\includegraphics[width=.4\textwidth]{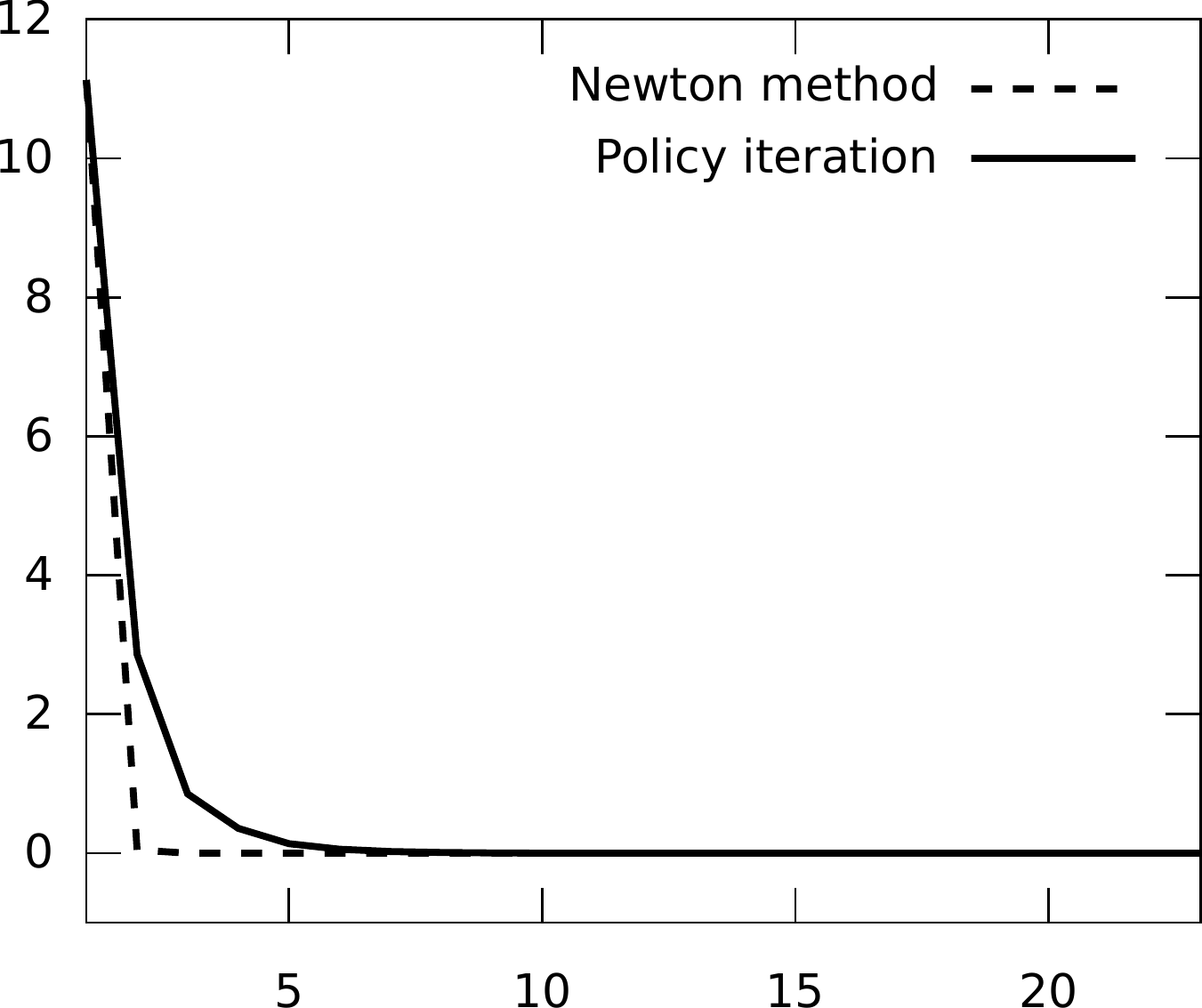} &
\includegraphics[width=.4\textwidth]{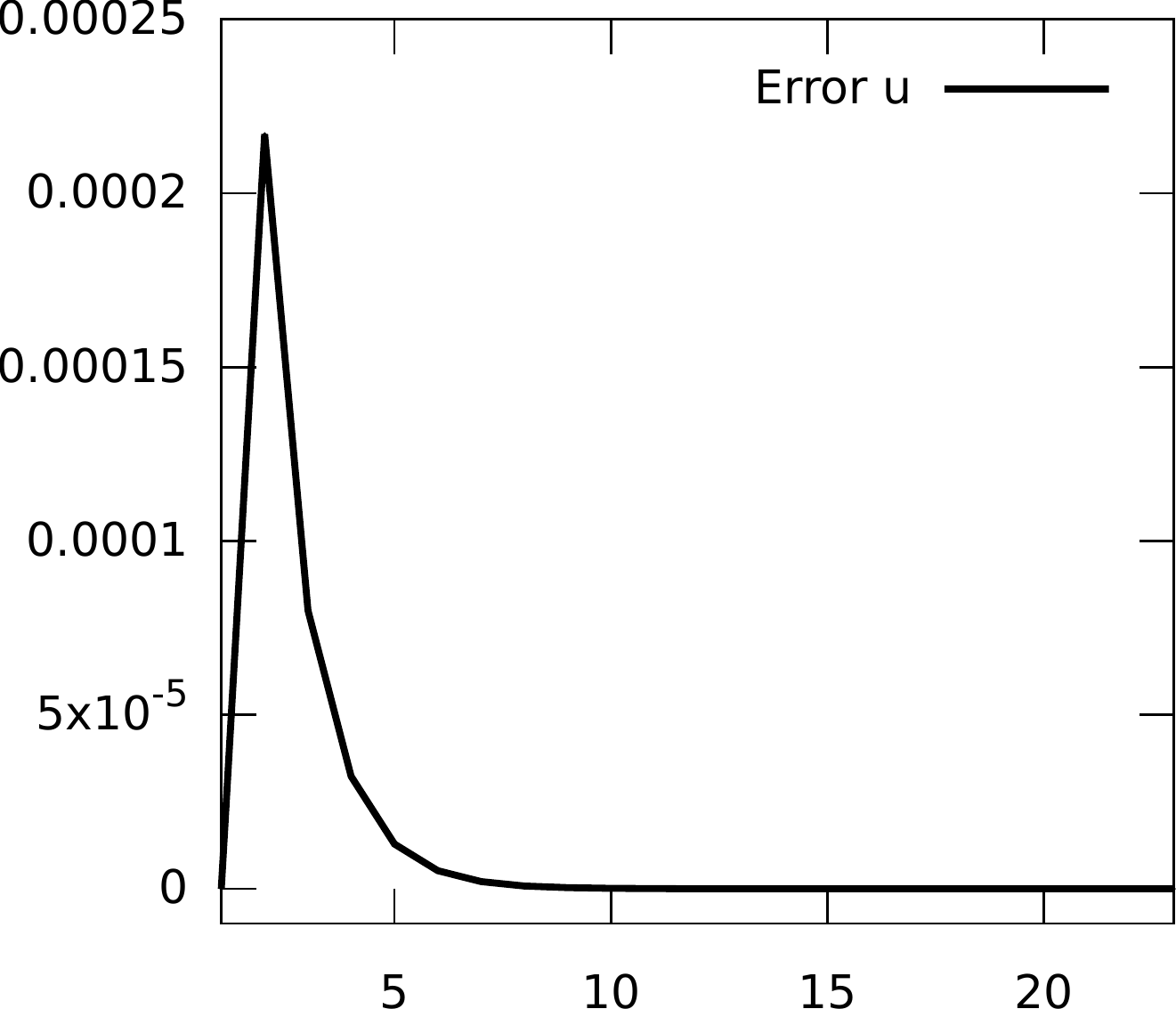} \\
(a)&(b)\\
\includegraphics[width=.4\textwidth]{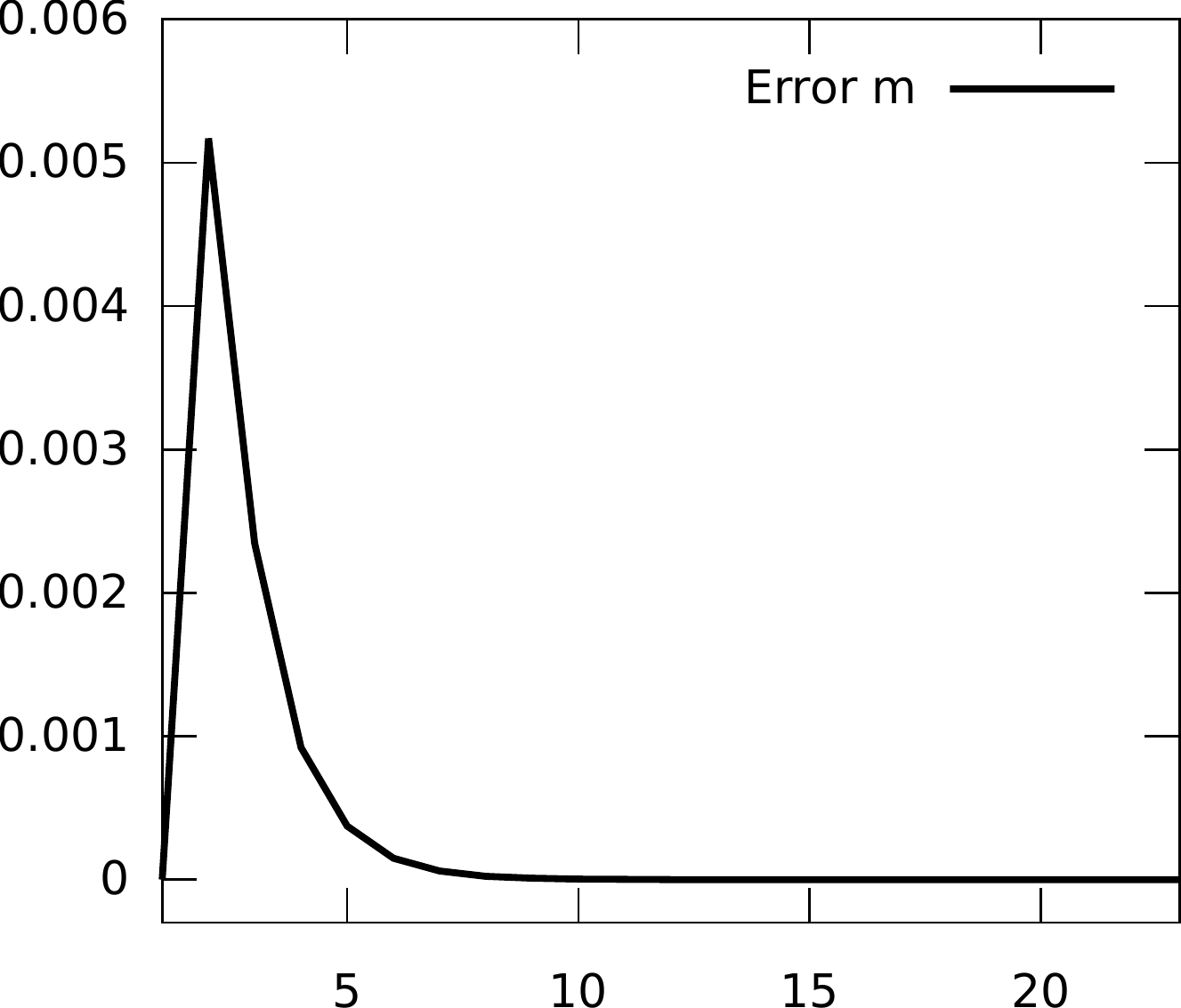}&
\includegraphics[width=.4\textwidth]{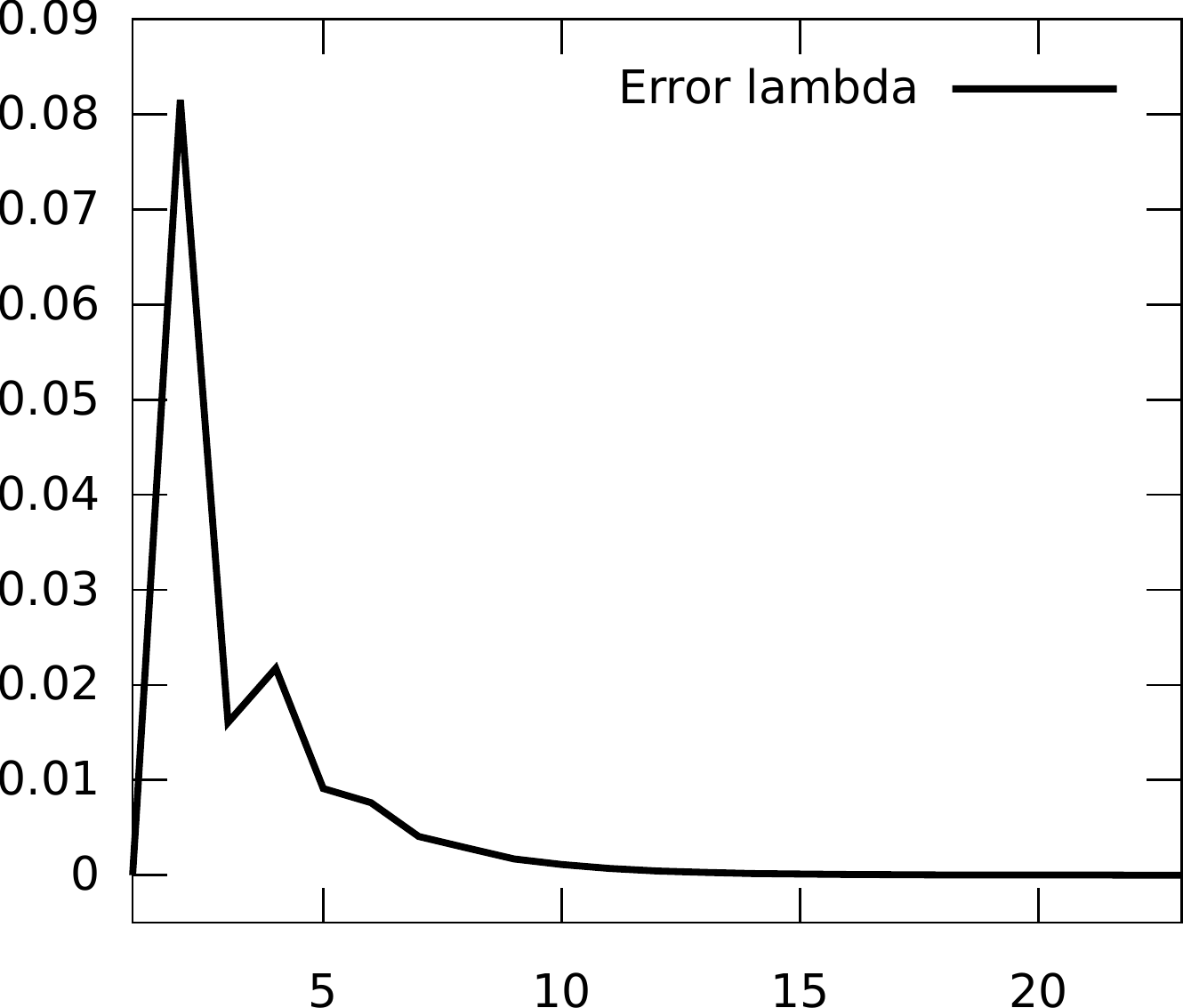} \\
(c)&(d)
\end{tabular}
\end{center}
\caption{Policy iteration vs Newton method, (a) MFG system residuals and (b-c-d) differences in the solutions $u$, $m$, $\lambda$.}\label{test-comp}
\end{figure} 
More precisely, in Figure \ref{test-comp}a, we show the residuals of the two methods, against the number of iterations needed to reach the given tolerance $\tau$. The Newton method converges in just $5$ iterations, while the policy iteration requires $24$ iterations. Similarly, in Figure \ref{test-comp}b-c-d we show the differences between the solutions of the two methods in the discrete $L^2$ norm, against the number of iterations. Due to the particular choice of the initial guess, at the first iteration the two methods compute the same solution, but the policy iteration algorithm requires more iterations to reach the same accuracy for the residual. This is clearly expected, since the Newton method employs the descent direction associated to the full Jacobian matrix $\mathcal{J}$. Nevertheless, as reported in Table \ref{table-policyvsdirect}, the policy iteration exhibits a better performance as the number of grid nodes increases, due to the reduced size of the corresponding linear systems (see the averaged CPU times per iteration).
\begin{table}[!h]
 \centering
  \begin{tabular}{|c|c|c|c|c|}
    \hline
     & $|\mathcal{G}|$ & Its & Av.CPU/It (secs) & Total CPU (secs)\\
     \hline\hline
   NM & 200 & 5 & 0.006 & 0.034 \\
    \hline
    PI & 200 & 24 & 0.003 & 0.079 \\
    \hline\hline
    NM & 500 & 5 & 0.037 & 0.189       \\
    \hline
    PI & 500 & 25 & 0.009 & 0.247 \\
    \hline\hline
    NM & 1000 & 5 & 0.173 & 0.865    \\
    \hline
    PI & 1000 & 25 & 0.036 & 0.917   \\
    \hline\hline
    NM & 2000 & 5 &  0.973 & 4.869    \\
    \hline
    PI & 2000 & 25 & 0.241 & 6.039   \\
    \hline\hline
    NM & 5000 & 5 &  13.662 & 68.313  \\
    \hline
    PI & 5000 & 25 & 1.724 & 43.115   \\
    \hline\hline
    NM & 10000 & 5 &  123.769 & 618.845   \\
    \hline
    PI & 10000 & 25 & 7.917 & 197.949   \\
    \hline
    \end{tabular}
  \caption{Policy iteration (PI) vs Newton method (NM) under grid refinement, number of iterations, averaged CPU times per iteration, and total CPU times.}\label{table-policyvsdirect}
\end{table}
 We must observe that the comparison is not truly fair, since the update step for the policy iteration is explicit in this example, with a negligible computational cost. However, in the general case, we expect that the relevant speed-up of the proposed algorithm on large grids can compensate the efforts for the optimization process \eqref{eq:update_policy_stat}, since it is a point-wise procedure that can be completely parallelized. 

Now, let us consider the evolutive MFG system \eqref{MFG}, again in the special case of the Eikonal-diffusion HJB equation, but in dimension $d=2$. Spatial discretization is performed in both dimensions as in the one dimensional case, while, for time discretization, we employ an implicit Euler method for both the time-forward FP equation and the time-backward HJB equation. To this end, we introduce a uniform grid on the interval $[0,T]$ with $N+1$ nodes $t_n=n \,dt$, for $n=0,\dots, N$, and time step $dt=T/N$. Then, we denote by $U_n, M_n$ and $Q_n$ the vectors on $\mathcal{G}$ approximating respectively the solution and the policy at time $t_n$. In particular, we set on $\mathcal{G}$ the initial condition $M_0=m_0(\cdot)$ and the final condition $U_N=u_T(\cdot)$. The policy iteration algorithm for the fully discretized system is the following:
 given an initial guess $Q^{(0)}_n:\mathcal{G}\to\R^{2d}$ for $n=0,\dots, N$, initial and final data $M_0,\,U_N:\mathcal{G}\to\R$, iterate on $k\ge 0$ up to convergence, 

    \begin{itemize}
    	\item[(i)]  Solve for $n=0,\dots, N-1$ on $\mathcal{G}$
    	$$
    	\left\{
    	\begin{array}{l}
    	 M^{(k)}_{n+1}-dt\left(\varepsilon\D_\sharp M^{(k)}_{n+1}+\diver_\sharp(M^{(k)}_{n+1}\,Q^{(k)}_{n+1})\right)=M^{(k)}_{n}\\
    	 M^{(k)}_{0}=M_{0}    
    	\end{array}
    	\right.
    	$$
    	\item[(ii)] Solve for $n=N-1,\dots, 0$ on $\mathcal{G}$
    	$$
    	    	\left\{
    	\begin{array}{l}
    U_{n}^{(k)}- dt\left(	 \varepsilon\D_\sharp U^{(k)}_{n}-Q^{(k)}_{n,\pm}\cdot D_\sharp U^{(k)}_{n}\right)\\
    \hskip 19pt= U_{n+1}+dt\left(\frac12 |Q^{(k)}_{n+1,\pm}|^2+V_\sharp+F_\sharp(M^{(k)}_{n+1})\right)
\\
    	 U^{(k)}_{N}=U_{N}    
    	\end{array}
    	\right.
      	$$
    	\item[(iii)] Update the policy
    	$Q^{(k+1)}_n=D_\sharp U^{(k)}_n$ on $\mathcal{G}$ for $n=0,\dots, N$, and set $k\leftarrow k+1$.
    \end{itemize}
Note that each iteration of the algorithm now requires the solution of $2N$ linear systems of size $|\mathcal{G}|\times|\mathcal{G}|$.

 In the following test, we choose a number of nodes $I=50$ for each space dimension and $N=100$ nodes in time, corresponding to $200$ linear systems of size $2500\times 2500$ per iteration.   
We set the final time $T=1$, the diffusion coefficient $\varepsilon=0.3$, the coupling cost $F(m)=m^2$ and the potential $V(x_1,x_2)=-|\sin(2\pi x_1)\sin(2\pi x_2)|$. Moreover, to check convergence, we rely on the discrete $L^2$ squared distance between policies at successive iterations, i.e. we stop the algorithm when  $\displaystyle\max_n\int_\sharp|Q^{(k+1)}_n-Q^{(k)}_n|^2<\tau$, setting the tolerance $\tau=10^{-8}$. Finally, we take the initial policy 
$Q^{(0)}_n\equiv(0,0,0,0)$ 
on $\mathcal{G}$ for $n=0,\dots, N$, while we define the initial and final data $M_0$ and $U_N$ approximating on $\mathcal{G}$ the functions 
$m_0(x_1,x_2)=-u_T(x_1,x_2)=C\exp\left\{-40[(x_1-\frac{1}{2})^2+(x_2-\frac{1}{2})^2]\right\}$,
namely two Gaussian with opposite signs centered at the point 
$(\frac12,\frac12)$, 
with $C>0$ such that $\int_{\mathbb{T}^2}m_0(x)dx=1$. \\
The algorithm requires $58$ iterations to reach convergence up to $\tau$, with an averaged CPU time per iteration of $7.3$ seconds, and a total CPU time of $423$ seconds. In Figure \ref{test-time}, we report some relevant frames of the time evolution, by plotting, for $n$ fixed, the solution density $M_n$ in gray scales, and superimposing the optimal dynamics for the FP equation, which is obtained by merging the two-sided components of $Q_n$, namely  $(Q_{n,L}^1+Q_{n,R}^1,Q_{n,L}^2+Q_{n,R}^2)$. 
\begin{figure}[h!]
 \begin{center}
 \begin{tabular}{ccc}
\includegraphics[width=.3\textwidth]{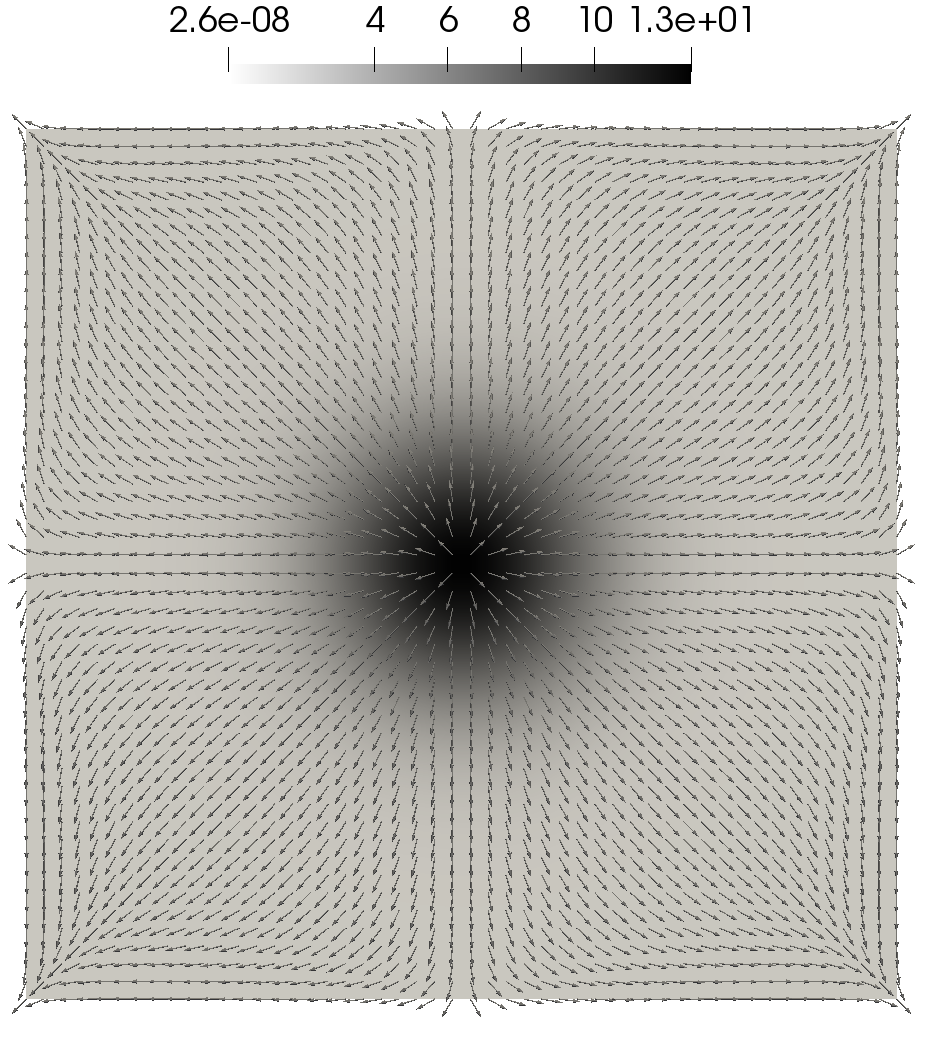} &
\includegraphics[width=.3\textwidth]{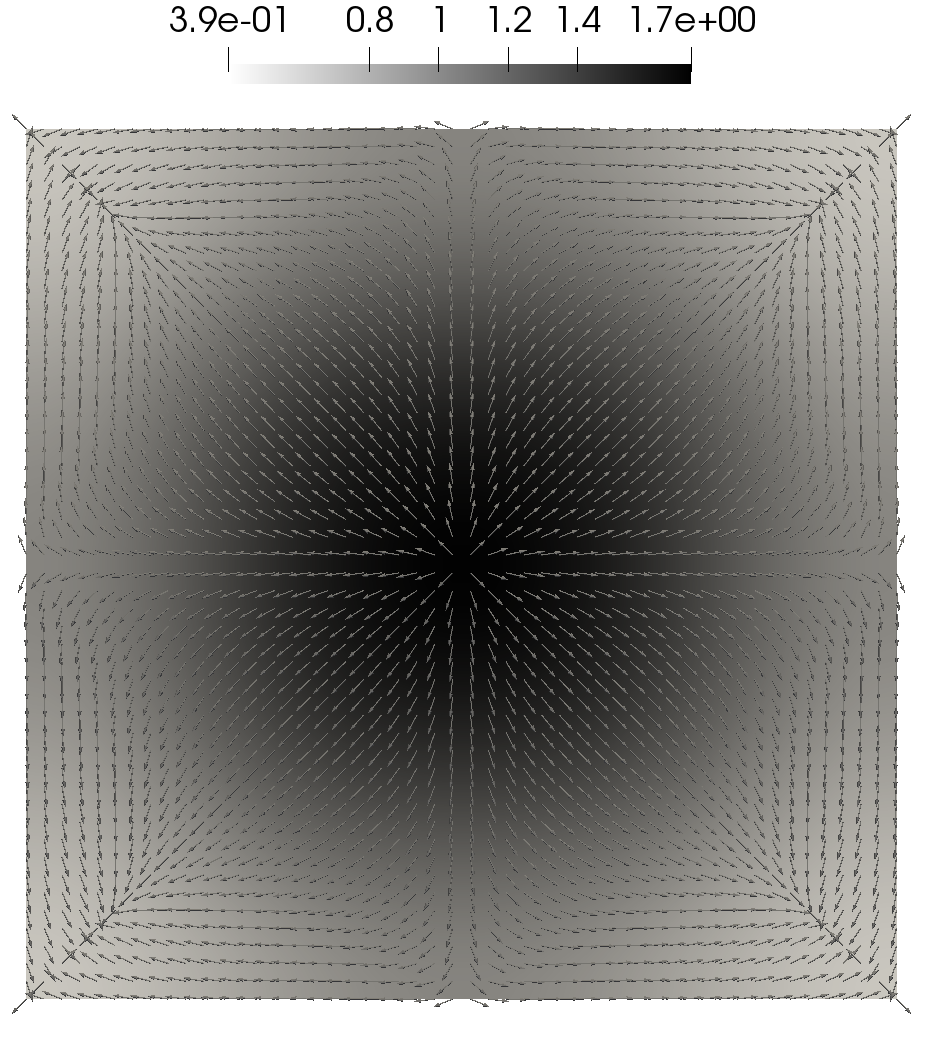} &
\includegraphics[width=.3\textwidth]{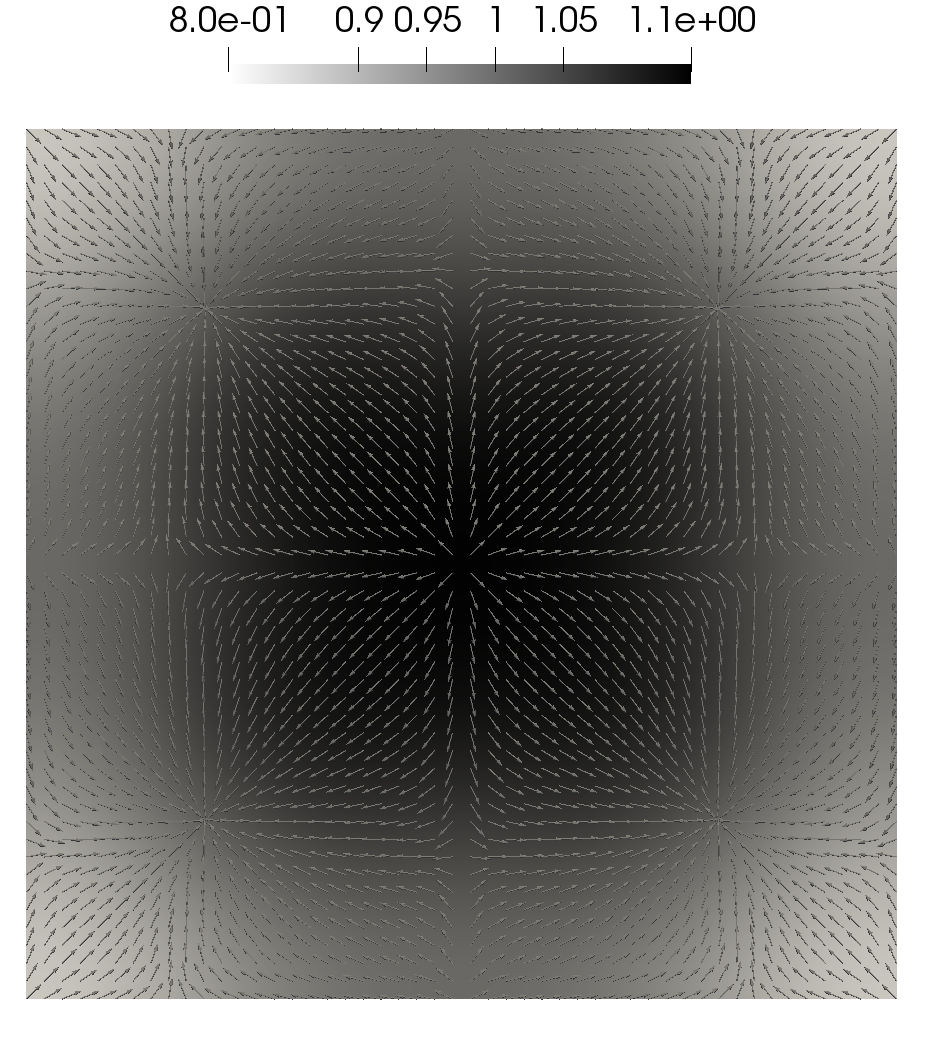} \\
$t=0$ & $t=0.1$ &$t=0.2$\\\\
\includegraphics[width=.3\textwidth]{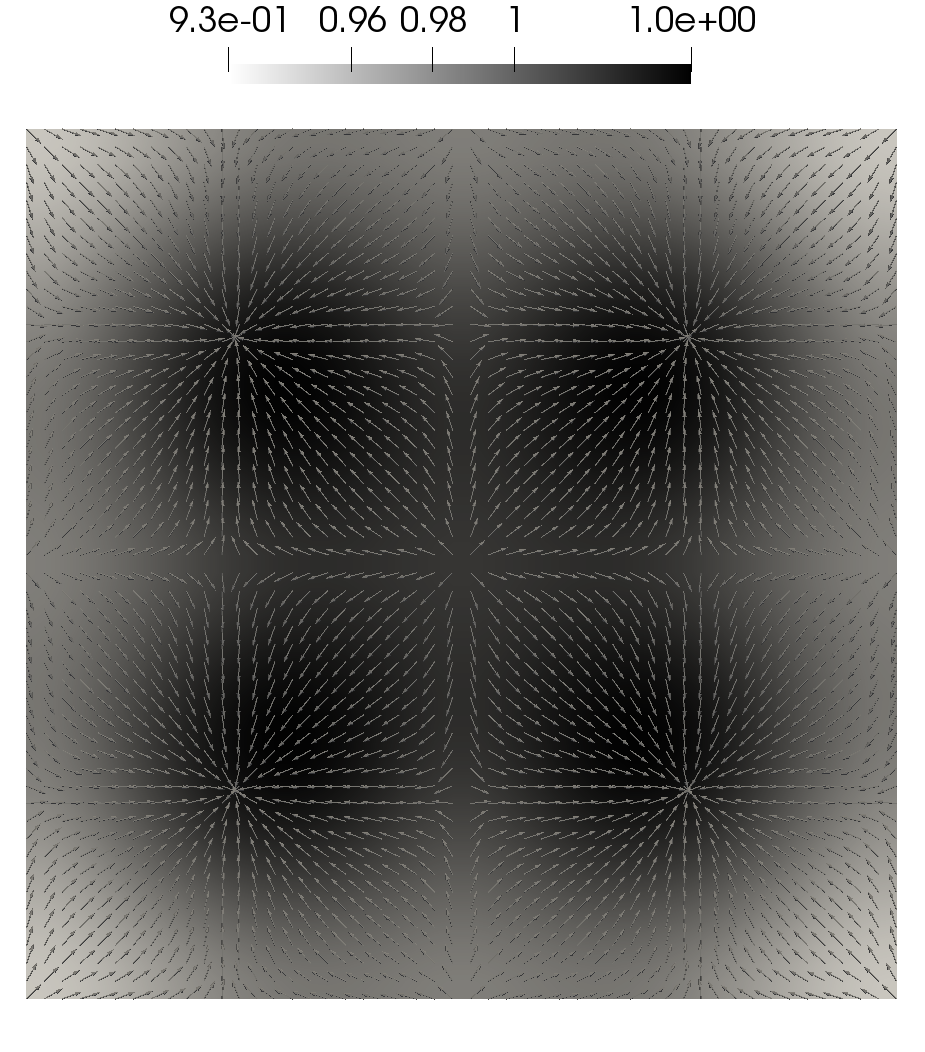} &
\includegraphics[width=.3\textwidth]{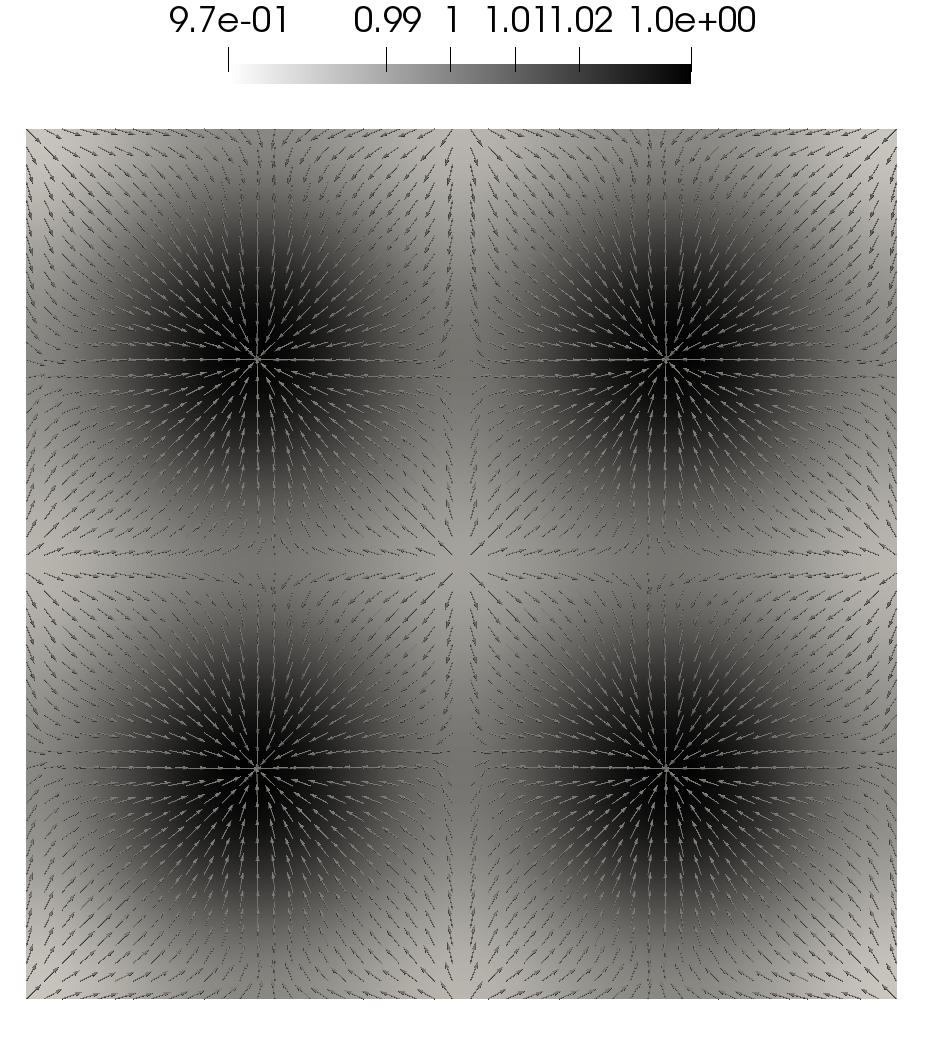} &
\includegraphics[width=.3\textwidth]{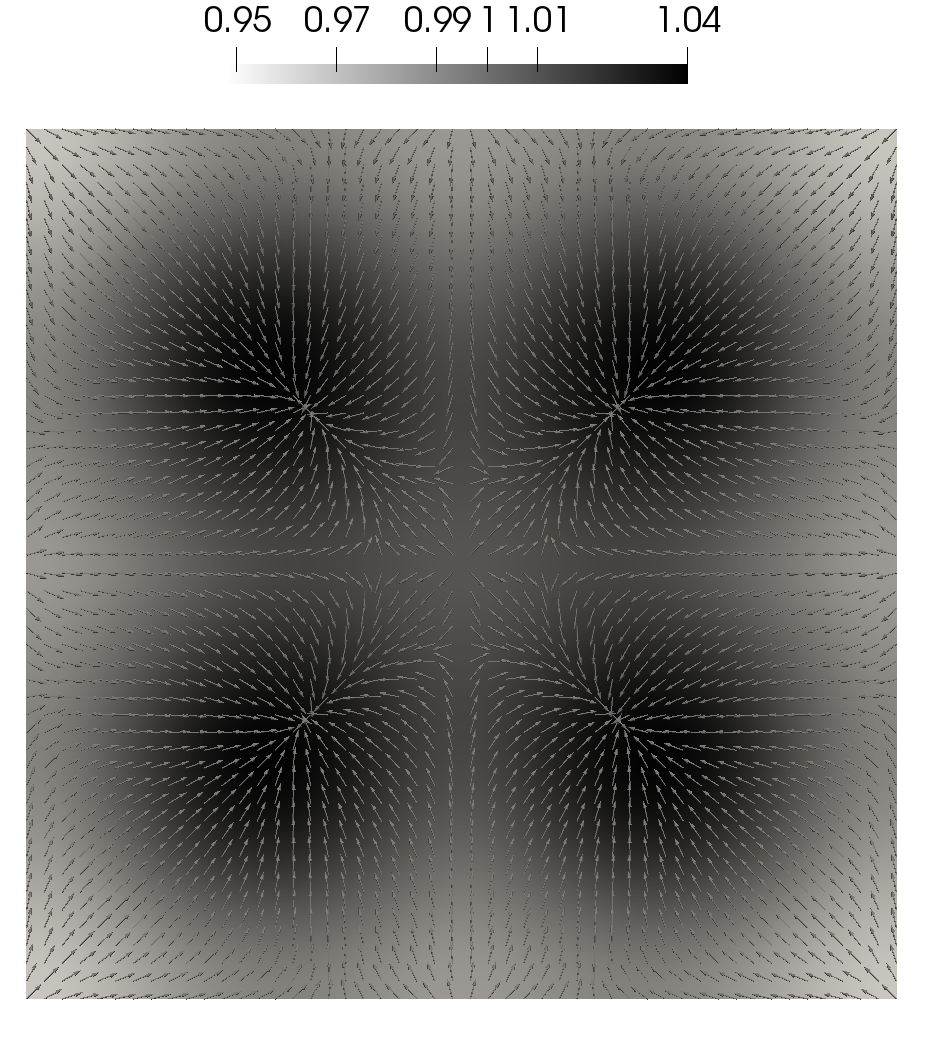} \\
$t=0.3$ & $t=0.7$ &$t=0.8$\\\\
\includegraphics[width=.3\textwidth]{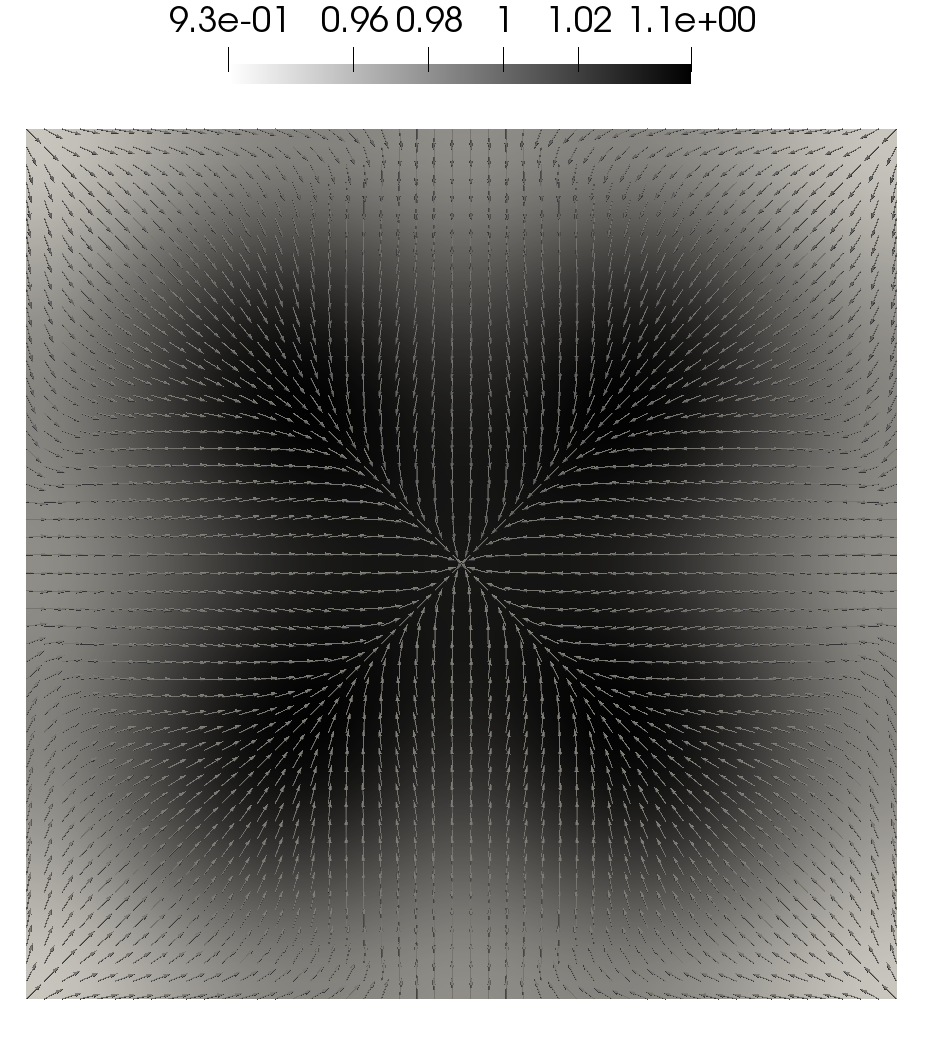} &
\includegraphics[width=.3\textwidth]{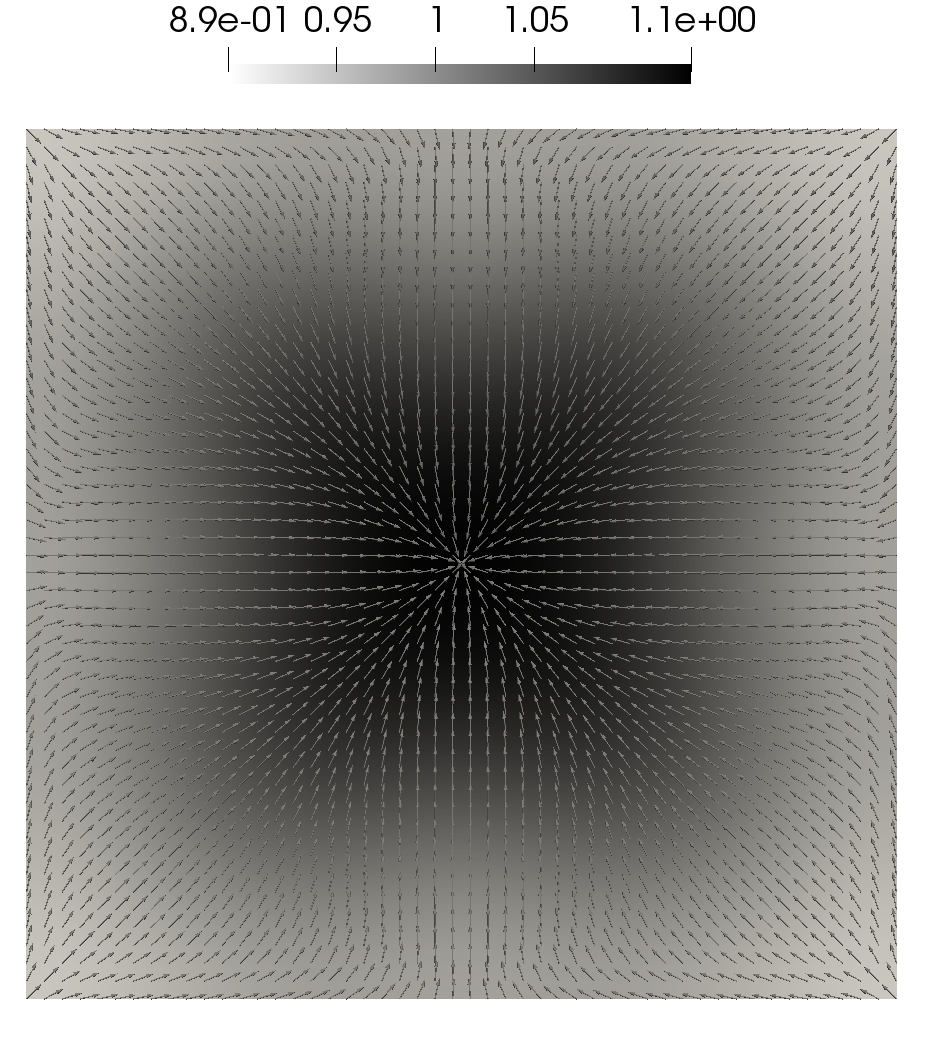} &
\includegraphics[width=.3\textwidth]{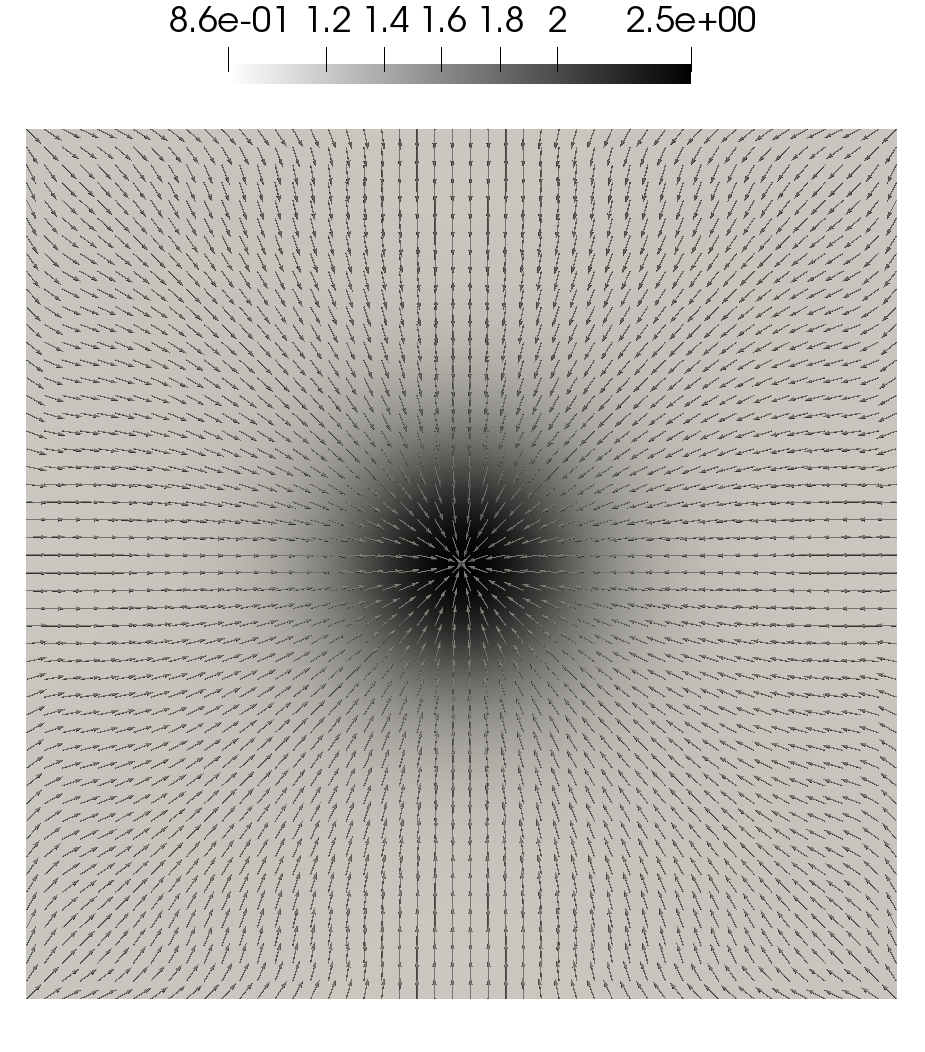} \\
$t=0.85$ & $t=0.9$ &$t=1$
\end{tabular}
\end{center}
\caption{Solution of the evolutive MFG system at different times, mass density in gray scales and optimal dynamics.}\label{test-time}
\end{figure}
We remark that, by definition, the absolute minimum of the potential $V$ is achieved at the points $(\frac14,\frac14)$, $(\frac34,\frac14)$, $(\frac14,\frac34)$, $(\frac34,\frac34)$. We observe that the optimal dynamics readily splits the density symmetrically in four parts, pushing them to concentrate around these minimizers, while, in the final part of the time interval $[0,T]$, it forces the density to merge again and concentrate exactly around the point $(1/2,1/2)$ (i.e. the absolute minimizer of $u_T$), in order to to satisfy the final condition for the HJB equation.
This configuration corresponds to the so called {\it turnpike} phenomenon \cite{pt}. Roughly speaking, it turns out that the solution of the evolutive problem corresponds to approach the solution of the corresponding stationary ergodic problem, standing on this equilibrium as long as possible before moving again towards $u_T$.  
\bigskip

{\bf Acknowledgements.} 
We warmly thank the anonymous referees for their careful reading of the manuscript and the precious comments that improved the presentation. The third-named author has been partially supported by 
the Fondazione CaRiPaRo
Project ``Nonlinear Partial Differential Equations:
Asymptotic Problems and Mean-Field Games". 

\small

\medskip
\begin{flushright}
\noindent \verb"cacace@mat.uniroma3.it"\\
	Dipartimento di Matematica e Fisica\\
	Universit\`{a} degli Studi Roma Tre\\
	Largo S. L. Murialdo 1, 00146   Roma (Italy)\\
\smallskip	
\noindent \verb"fabio.camilli@uniroma1.it"\\
	SBAI, Sapienza Universit\`{a} di Roma\\
	via A.Scarpa 14, 00161 Roma (Italy)	\\
\smallskip	
\noindent \verb"alessandro.goffi@math.unipd.it"\\
Dipartimento di Matematica\\
Universit\`a di Padova\\
via Trieste 63, 35121 Padova (Italy)

\end{flushright}


\begin{thebibliography}{99}

	\bibitem{acd}
	Achdou, Y.; Capuzzo Dolcetta, I. Mean Field Games: numerical methods, SIAM J. Numer. Anal., 48 (2010), pp. 1136-1162.
	
	\bibitem{accd}
	Achdou, Y.; Camilli, F.; Capuzzo Dolcetta, I. Mean field games: convergence of a finite
	difference method. SIAM J. Numer. Anal. 51 (2013), no. 5, 2585-2612.
	
	\bibitem{al}
	 Achdou, Y.; Lauriere, M. On the system of partial differential equations arising in mean field type control. Discrete Contin. Dyn. Syst. 35 (2015), no. 9, 3879-3900.
	
	\bibitem{al1}
	Achdou, Y.; Lauriere, M. Mean Field Games and applications: numerical aspects. In {\it Mean field games}, 249–307, Lecture Notes in Math., 2281,  Springer, Cham, 2020.
	
	\bibitem{alla}
	Alla, A.; Falcone, M.; Kalise, D. An efficient policy iteration algorithm for dynamic programming equations. SIAM J. Sci. Comput. 37 (2015), no. 1, A181-A200. 
	
	\bibitem{bf}
	Bardi, M.; Feleqi, E. Nonlinear elliptic systems and mean-field games.  
	NoDEA Nonlinear Differential Equations Appl. 23 (2016), no. 4, Art. 44, 32 pp.

    \bibitem{bp}
    Bardi, M.; Priuli, F. Linear-quadratic N-person and mean-field games
    with ergodic cost, SIAM J. Control Optim. 52 (2014), no. 5, 3022-3052.
    
	\bibitem{b}
	Bellman, R. {\it Dynamic Programming}. Princeton Univ. Press, Princeton, 1957.
	

     
	\bibitem{BCCS}
	Bianchini, S.; Colombo, M.; Crippa, G.; Spinolo, L. V. Optimality of integrability estimates for advection-diffusion equations. NoDEA Nonlinear Differential Equations Appl. 24 (2017), no. 4, Paper No. 33, 19 pp.
	

	\bibitem{bmz}
	Bokanowski, O.; Maroso, S.; Zidani, H. Some convergence results for Howard's algorithm. SIAM J. Numer. Anal. 47 (2009), no. 4, 3001-3026. 
	 			 
	 \bibitem{bcarda}
     Briani, A.; Cardaliaguet, P.; Stable solutions in potential mean field game systems. NoDEA Nonlinear Differential Equations Appl. 25 (2018), no. 1.
     
	\bibitem{bks}
	Brice\~{n}o-Arias, L. M.; Kalise, D.; Silva, F. J. Proximal methods for stationary mean
	field games with local couplings. SIAM J. Control Optim. 56 (2018), no. 2, 801-836.



	\bibitem{cc}
	Cacace, S.;  Camilli, F. A generalized Newton method for homogenization
	of Hamilton-Jacobi equations, SIAM J. Sci. Comput. 38 (2016), no. 6,
	A3589-A3617.
	

	 \bibitem{ch}
     Cardaliaguet, P.; Hadikhanloo, S. Learning in mean field games: the fictitious play. ESAIM Control Optim. Calc. Var. 23 (2017), no. 2, 569-591.
          
     \bibitem{nhm}
      Cardaliaguet, P.; Lasry, J.-M.; Lions, P.-L.; Porretta, A. Long time average of mean field games. Netw. Heterog. Media 7 (2012), no. 2, 279--301.
      
	\bibitem{cs}
	Carlini, E.; Silva, F. A semi-Lagrangian scheme for a degenerate second order mean
	field game system. Discrete Contin. Dyn. Syst. 35 (2015), no. 9, 4269-4292.
	
	\bibitem{cdl}
	Carmona, R.; Delarue, F.; Lacker, D. Mean field games of timing and models for bank runs. Appl. Math. Optim. 76 (2017), no. 1, 217--260.

	
	
	\bibitem{CG1}
	Cirant, M.; Goffi, A. On the existence and uniqueness of solutions to time-dependent fractional MFG. SIAM J. Math. Anal. 51 (2019), no. 2, 913-954.
	
	\bibitem{CG2}
	Cirant, M.; Goffi, A. Lipschitz regularity for viscous Hamilton-Jacobi equations with $L^p$ terms. Ann. Inst. H. Poincar\'e Anal. Non Lin\'eaire 37 (2020), no. 4, 757-784.
	
	
	\bibitem{CG4}
	Cirant, M.; Goffi, A. On the problem of maximal $L^q$-regularity for viscous Hamilton-Jacobi equations. Arch. Rat. Mech. Anal. 240 (2021), 1521-1534.
	

 \bibitem{SPQR}
Davis, T.; SuiteSparse,
\texttt{http://faculty.cse.tamu.edu/davis/suitesparse.html}



	\bibitem{fl}
	Fleming, W.~H. Some Markovian optimization problems. J. Math. Mech. 12 (1963), 131-140.	
	
	\bibitem{gnp}
	Gomes, D. A.; Nurbekyan, L.; Pimentel, E. A. {\it Economic models and mean-field games theory}. IMPA Mathematical Publications,  Instituto Nacional de Matemática Pura e Aplicada (IMPA), Rio de Janeiro, 2015. iv+127 pp.
	
 
	\bibitem{h}
	Howard, R. {\it Dynamic Programming and Markov Processes}. MIT Press, Cambridge, 1960.
	 
	\bibitem{hcm}
	Huang,  M.; Caines,  P.~E.; Malhame,  R.~P.  Large-population cost-coupled LQG problems with non uniform agents: Individual-mass behaviour and decentralized $\epsilon$-Nash equilibria. IEEE Transactions on Automatic  Control, 52 (2007), 1560-1571.
	
	\bibitem{kss}
	Kerimkulov, B.; \v{S}i\v{s}ka, D.;  Szpruch, L.
	Exponential convergence and stability of Howards's policy improvement algorithm for controlled diffusions, SIAM J. Control Optim. 53 (2020), 1314--1340.
	
	\bibitem{LSU}
	 Ladyzenskaja, O.~A.; Solonnikov,  V.~A.; Ural'ceva, N.~N.
     {\it Linear and quasilinear equations of parabolic type}.
	 Translated from the Russian by S. Smith. Translations of Mathematical
	 Monographs, Vol. 23. American Mathematical Society, Providence, R.I., 1968.
	 
	 \bibitem{ll}
	 Lasry,  J.-M.;  Lions, P.-L. Mean field games. Jpn. J. Math. 2(2007), 229--260.
	 
	\bibitem{lions85}
	Lions, P.-L.   Quelques remarques sur les problemes elliptiques quasilin\'aires du second ordre. J. Analyse Math. 45 (1985), 234-254.

	\bibitem{Lun}
   Lunardi, A.\newblock  {\it Interpolation theory}. Vol. 16,  Appunti della Scuola Normale
  Superiore di Pisa (Nuova Serie), 2018.


	\bibitem{MPR}
    Metafune, G.; Pallara, D.;  Rhandi A. Global properties of transition probabilities of singular diffusions.
    Teor. Veroyatn. Primen., 54 (2009), 116--148.

   \bibitem{pt}
   Porretta, A. On the turnpike property for mean field games. Minimax Theory Appl. 3 (2018), no. 2, 285-312.


	
	
	\bibitem{pu1}
	Puterman, M.~L. On the convergence of policy iteration for controlled diffusions. J. Optim. Theory Appl. 33 (1981), no. 1, 137-144.
	
	
	\bibitem{pu2}
	Puterman, M.~L.  Optimal control of diffusion processes with reflection. J. Optim. Theory Appl. 22 (1977), no. 1, 103-116.
	
	\bibitem{pb}
	Puterman, M.~L.; Brumelle, S.~L.
	On the convergence of policy iteration in stationary dynamic programming. 
	Math. Oper. Res. 4 (1979), 60-69.
	


\bibitem{santos}
 Santos, M. S.; Rust, J. Convergence properties of policy iteration. SIAM J. Control Optim. 42 (2004), no. 6, 2094-2115.
 
\bibitem{ST}
Schmeisser, H.-J. ; Triebel, H.
 {\it Topics in {F}ourier analysis and function spaces}.
 A Wiley-Interscience Publication. John Wiley \& Sons, Ltd., Chichester, 1987.

	
	
\end{thebibliography}
 \end{document}